\renewcommand{\epsilon}{\varepsilon}
\renewcommand{\phi}{\varphi}
\newcommand{\sign}{\operatorname{sign}}
\newcommand{\abs}[1]{\left\lvert #1 \right\rvert}
\newcommand{\TT}{^{\mathrm{T}}}
\newcommand{\diffd}{\mathrm{d}}
\newcommand{\dt}[1]{\deriv{#1}{t}}
\newcommand{\deriv}[2]{\frac{\diffd {#1}}{\diffd  {#2}}}
\newcommand{\derivk}[3]{\frac{\diffd^{#3} {#1}}{\diffd  {#2}^{#3}}}
\newcommand{\spow}[2]{\left\lfloor #1 \right\rceil^{#2}}
\newcommand{\spowf}[3]{\spow{#1}{\frac{#2}{#3}}}
\newcommand{\apow}[2]{\abs{#1}^{#2}}
\def\F{\mathbf{F}}
\def\g{\mathbf{g}}
\def\x{\mathbf{x}}
\def\y{\mathbf{y}}
\def\z{\mathbf{z}}
\newcommand{\vxi}{\bm{\xi}}
\newcommand{\RR}{\mathbb{R}}
\newcommand{\NN}{\mathbb{N}}
\newcommand{\ZZ}{\mathbb{Z}}
\newtheorem{thm}{Theorem}[section]
\newtheorem{prop}[thm]{Proposition}
\newtheorem{cor}[thm]{Corollary}
\newtheorem{lem}[thm]{Lemma}
\theoremstyle{definition}
\newtheorem{rem}[thm]{Remark}
\newtheorem{defn}[thm]{Definition}
\newtheorem{exmp}[thm]{Example}
\begin{document}

\title{Proper Implicit Discretization of Arbitrary-Order Robust Exact Differentiators}

\author[1]{Richard Seeber}

\authormark{RICHARD SEEBER}

\address[1]{\orgdiv{Christian Doppler Laboratory for Model Based Control of Complex Test Bed Systems, Institute of Automation and Control}, \orgname{Graz University of Technology}, \orgaddress{\city{Graz}, \country{Austria}}}

\corres{*Corresponding author: R. Seeber. \email{richard.seeber@tugraz.at}}

\abstract[Summary]{This paper considers the implicit Euler discretization of Levant's arbitrary order robust exact differentiator in presence of sampled measurements.
    Existing implicit discretizations of that differentiator are shown to exhibit either unbounded bias errors or, surprisingly, discretization chattering despite the use of the implicit discretization.
    A new, proper implicit discretization that exhibits neither of these two detrimental effects is proposed  by computing the differentiator's outputs as appropriately designed linear combinations of its state variables.
    A numerical differentiator implementation is discussed and closed-form stability conditions for arbitrary differentiation orders are given. The influence of bounded measurement noise and numerical approximation errors is formally analyzed.
    Numerical simulations confirm the obtained results.
}

\keywords{Levant's differentiator, implicit discretization, sliding mode observer, robust exact differentiator, measurement noise, sampled measurements}
\maketitle

\newcommand{\FL}[1]{\mathcal{F}_L^{(#1)}}
\newcommand{\FLb}[1]{\mathcal{F}_{M}^{(#1)}}
\newcommand{\FLz}[1]{\mathcal{F}_{0}^{(#1)}}
\newcommand{\EN}{\mathcal{E}_N}
\newcommand{\Diff}[1]{\mathcal{D}^{(#1)}_T}
\newcommand{\Diffh}[1]{\hat{\mathcal{D}}^{(#1)}_T}

\section{Introduction}

The differentiation of measured signals is an important task in many control related engineering applications.
Accordingly, many approaches for this purpose exist in literature.
Among the most important ones are linear high-gain differentiators\cite{khalil2002nonlinear,vasiljevic2008error}, linear algebraic differentiators\cite{mboup2018frequency}, and robust exact differentiators based on sliding modes\cite{levant1998robust,levant_ijc03}.
The latter have the particularly attractive feature that they differentiate exactly in the absence of measurement noise while also being robust in its presence.

In practice, measurements are typically available only in the form of sampled signals.
For this case, different discrete-time implementations of Levant's robust exact differentiator\cite{levant_ijc03} (RED) exist.
Its proper explicit (forward Euler) discretization proposed by Livne and Levant\cite{livne2014proper} preserves the asymptotic accuracies of the continuous-time differentiator, but suffers from discretization chattering that increases the differentiation error.
The implicit discretization technique, originally proposed by Acary and Brogliato\cite{acary2010implicit,brogliato2021digital} and later applied to sliding mode differentiators by Mojallizadeh et al.\cite{mojallizadeh2021time} and Carvajal-Rubio et al.\cite{carvajal2021implicit,carvajal2022implicit}, is capable in theory to avoid this type of chattering and to yield quasi-exact sample-based differentiators as introduced by Seeber and Haimovich\cite{seeber202Xoptimal}.

A number of implicitly discretized variants of the robust exact differentiator have been proposed in literature.
Mojallizadeh et al.\cite{mojallizadeh2021time} obtain the so-called implicit homogeneous discrete-time differentiator (I-HDD) by means of a straightforward modification of expressions of the properly discretized explicit Euler discretization.
While this approach indeed eliminates chattering, it exhibits a possibly unbounded bias error starting with differentiation order two.
Carvajal-Rubio et al.\cite{carvajal2021implicit} avoid this bias error in their proposed homogeneous implicit discrete-time differentiator (HIDD) by designing an observer for a discrete-time integrator chain, rather than modifying an existing (explicit) discretization of the differentiator.
However, their approach exhibits a chattering of the differentiation error that occurs, surprisingly, despite the use of the implicit discretization.
For that reason, structural conditions for obtaining a proper implicit discretization within have recently been explored by Seeber and Koch\cite{seeber2023structural}.

The present paper proposes a proper implicit Euler discretization of the robust exact differentiator---called the implicit robust exact differentiator (IRED)---which, in contrast to existing approaches, exhibits neither \emph{discretization chattering} nor \emph{bias errors}.
Compared to the recently proposed structural conditions\cite{seeber2023structural} for attaining these properties within a class of implicitly discretized differentiators, it tackles the problem from a different angle:
by computing the differentiator's outputs by means of appropriately designed linear combinations of its state variables, rather than using the state variables themselves as outputs, as it is usual in all existing differentiator structures.
This approach yields a proper implicit discretization of the robust exact differentiator in an appealingly simple form.
Based on a Lyapunov function recently proposed for the continuous-time differentiator\cite{seeber2023closed}, the proposed approach constitutes, moreover, the first \emph{discrete-time} implementation of the robust exact differentiator that is accompanied by \emph{closed-form} stability conditions for \emph{arbitrary} differentiation orders.
The URL \url{https://github.com/seeberr/ired_toolbox} provides a toolbox implementation of the proposed IRED for Matlab/Simulink.\cite{seeber2024implicit}

The paper is structured as follows.
Section~\ref{sec:problem} introduces the considered problem of numerical signal differentiation from sampled measurements and motivates the present paper by showing that existing implicit discretizations of the robust exact differentiator exhibit either discretization chattering or bias errors.
Section~\ref{sec:proposed} then introduces the proposed implicit robust exact differentiator  and states the main results: closed-form sufficient conditions for Lyapunov stability and finite-time convergence for arbitrary differentiation orders, bounds on the differentiation error showing robustness to measurement noise and absence of chattering, and an analysis of the influence of approximation errors in the differentiator's numerical implementation.
Section~\ref{sec:stability} then performs the formal stability analysis to prove the theorems from Section~\ref{sec:proposed}.
Section~\ref{sec:simulation} illustrates the proposed differentiator's performance and compares it to existing approaches from literature in a simulation example.
Section~\ref{sec:conclusion} gives concluding remarks.
Proofs of all lemmata are given in Appendix~\ref{app:proofs}.

\textbf{Notation:}
The sets $\RR$, $\RR_{\ge 0}$, $\RR_{> 0}$ denote the reals, nonnegative reals, and positive reals, respectively, $\NN$ and $\NN_0$ are the positive and nonnegative integers, and vectors are written as boldface lowercase letters.
For a function $f : \RR\to \RR$, $\dot f = \dt{f}$ and $\ddot f = \derivk{f}{t}{2}$ denote its first and second time derivative, and $f^{(i)} = \derivk{f}{t}{i}$ is written for its $i$th time derivative in general.
For $y, p \in \RR$ with $p \ne 0$, the  abbreviation $\spow{y}{p} = \apow{y}{p} \sign(y)$ is used.
The abbreviation $\spow{y}{0}$ denotes the \emph{set-valued} sign function defined as $\spow{y}{0} = \{ \sign(y) \}$ for $y \ne 0$ and $\spow{0}{0} = [-1, 1]$.
For a set $\mathcal{M} \subseteq \RR$ and a scalar $a \in \RR$, addition and multiplication involving the set are defined as $a + \mathcal{M} = \{ a + x : x \in \mathcal{M} \}$ and $a \mathcal{M} = \{ ax : x \in \mathcal{M} \}$.
For integers $i,j \in \NN_0$ with $0 \le j \le i$, the binomial coefficient is written as $\binom{i}{j} = \frac{i! }{j!(i-j)!}$, where $i! = \prod_{k=1}^{i} k$ denotes the factorial of $i$.

\section{Problem Statement}
\label{sec:problem}

\subsection{Continuous-Time Robust Exact Differentiation}

Consider an $m$ times differentiable signal $f : \RR_{\ge 0} \to \RR$, whose $m$th derivative $f^{(m)}$ is globally Lipschitz continuous with Lipschitz constant $L \in \RR_{\ge 0}$.
Its $m$ derivatives  may then be obtained by means of Levant's robust exact differentiator\cite{levant_ijc03}
\begin{equation}
    \label{eq:diff:levant}
    \begin{aligned}
        \dot z_1 &= z_2 + \lambda_1 L^{\frac{1}{m+1}} \spowf{f - z_1}{m}{m+1}, \\
&\vdots \\
        \dot z_{m} &= z_m + \lambda_{m} L^{\frac{m}{m+1}} \spowf{f - z_1}{1}{m+1}, \\
        \dot z_{m+1} &= \lambda_{m+1} L \sign(f - z_1)
    \end{aligned}
\end{equation}
with positive parameters $\lambda_1, \ldots, \lambda_{m+1}$, outputs $y_1 = z_2, \ldots, y_m = z_{m+1}$, and solutions understood in the sense of Filippov\cite{filippov1988differential}.
In absence of measurement noises and for appropriately selected parameters, the outputs $y_1, \ldots, y_m$ exactly reconstruct the derivatives $\dot f, \ldots, f^{(m)}$ after a finite convergence time, i.e., there exists a time $\tau$ depending on the initial conditions of the differentiator and of the signal's derivatives, such that $y_i(t) = f^{(i)}(t)$ holds for all $t \ge \tau$ and all $i = 1, \ldots, m$.
Moreover, the differentiator is robust to additive measurement noise with uniform bound $N \in \RR_{\ge 0}$ in the sense that in such case the differentiation error of the $i$th derivative is ultimately bounded by $a_i N^{\frac{m+1-i}{m+1}} L^{\frac{i}{m+1}}$ for some constants $a_1, \ldots, a_m \in \RR_{> 0}$ depending only on the parameters $\lambda_1, \ldots, \lambda_{m+1}$.

\subsection{Signal Differentiation from Sampled and Noisy Measurements}

Now and for the remainder of the paper, consider the case that the signal $f : \RR_{\ge 0} \to \RR$ to be differentiated is sampled with sampling period $T \in \RR_{>0}$ and is corrupted by additive measurement noise bounded by $N \in \RR_{\ge 0}$.
The sampled and noisy measurements of $f$ are denoted by $u_k = f(kT) + \eta_k$ ($k=0,1,2, \ldots$) with a measurement noise sequence $(\eta_k)$.
As before, the $m$th derivative of $f$ is assumed to be Lipschitz continuous.
Consequently, the $(m+1)$th derivative $f^{(m+1)}$ of the signal and the noise sequence $(\eta_k)$ are assumed to satisfy
\begin{equation}
    \label{eq:constraints}
    |f^{(m+1)}(t)| \le L \qquad\text{and}\qquad
    |\eta_k| \le N
\end{equation}
almost everywhere on $\RR_{\ge 0}$ and for all $k \in \NN_0$, respectively, with known Lipschitz constant $L \in \RR_{> 0}$ and unknown noise bound $N \in \RR_{\ge 0}$.

Denote by $\mathcal{S}$ the set of sequences with values in $\RR$.
A sample-based $m$th order differentiator with sampling time $T$ is then understood to be a causal operator $\Diff{m} : \mathcal{S}  \to \mathcal{S}^m$ mapping the measurement sequence $(u_k)$ to $m$ output sequences $[(y_{1,k}), \ldots, (y_{m,k})]\TT = \Diff{m}[ (u_k) ]$, where $y_{i,k}$ constitutes an estimate for the $i$th time derivative of $f$ at time $kT$, i.e., for $f^{(i)}(kT)$.
The elements of the output sequence are aggregated in the vector $\y_k = [y_{1,k} \, \ldots \, y_{m,k}]\TT$ and $(\y_k) = \Diff{m}[ (u_k) ]$ is written with slight abuse of notation.
Moreover, the set of all admissible signals and noise sequences satisfying \eqref{eq:constraints} is denoted by $\FL{m}$ and $\EN$, respectively, i.e.,
\begin{align}
    \FL{m} &= \{ f \in \mathcal{F}^{(m)} : |f^{(m+1)}(t)| \le L \text{ almost everywhere on $\RR_{\ge 0}$} \} \\
    \EN &= \{ (\eta_k) \in \mathcal{S} : |\eta_k| \le N \text{ for all $k$} \}
\end{align}
wherein $\mathcal{F}^{(m)}$ is the set of all $m$ times differentiable functions $f : \RR_{\ge 0}\to \RR$ whose $m$th derivative is Lipschitz continuous.
In the following, sample based differentiators are considered that are obtained from Levant's robust exact differentiator \eqref{eq:diff:levant} by means of an implicit Euler discretization.

As a subclass of sample-based differentiators, a sample-based implicit sliding-mode differentiator with sampling time $T$ is understood to be a state-space system of the form
\begin{equation}
    \label{eq:general:diff}
    \z_{k+1} \in \F_T(\z_k, \z_{k+1}, u_k), \qquad
    \y_k = \g_T(\z_k, \z_{k+1}, u_k)
\end{equation}
with fixed initial values.
Therein, the set-valued function $\F_T : \RR^n \times \RR^n \times \RR \to 2^{\RR^n}$ denotes the set-valued right-hand side of the implicit difference inclusion that is typically obtained when applying an implicit discretization technique\cite{brogliato2021digital}.
The function $\g_T : \RR^n \times \RR^n \times \RR \to \RR^m$ denotes the output map that maps the states and possibly the input to the differentiator's outputs.
It is assumed that $\F$ is such that for every initial condition $\z_1$ and input sequence $(u_k) \in \mathcal{S}$, \eqref{eq:general:diff} has a well-defined, unique solution sequence $(\z_k)$ and corresponding output sequence $(\y_k)$.
The next definition characterizes what is to be understood as a discrete-time sliding motion of such a system.
\begin{defn}
    \label{def:slidingmode}
    The sample-based implicit sliding-mode differentiator $\Diff{m}$ in the form \eqref{eq:general:diff} with given initial condition $\z_1$ and input sequence $(u_k)$ is said to be \emph{in discrete-time sliding mode} at time index $k$ if the set $\F_T(\z_k, \z_{k+1}, u_k)$ contains more than one element, i.e., if $\F_T(\z_k, \z_{k+1}, u_k) \setminus \{ \z_{k+1} \}$ is non-empty.
\end{defn}

\subsection{Motivating Examples -- Chattering and Bias of Existing Implicit Differentiators}

The implicit discretization technique is well known for its capability to avoid chattering of discrete-time implementations of sliding mode systems.\cite{brogliato2021digital}
Its application for discretizing the arbitrary-order robust exact differentiator is studied in the works of Carvajal-Rubio et al.\cite{carvajal2021implicit} and Mojallizadeh et al.\cite{mojallizadeh2021time}
As the following two examples show, the implicitly discretized differentiators proposed and studied therein may still exhibit chattering or---in the latter case---even unbounded bias errors, however.

\begin{exmp}
    \label{ex:hidd}
    Consider the first-order homogeneous implicit discrete-time differentiator (HIDD) proposed by Carvajal-Rubio et al.\cite{carvajal2021implicit}, which may be written as
    \begin{subequations}
        \label{eq:diff:carvajal-rubio}
        \begin{align}
            z_{1,k+1} &= z_{1,k} + T z_{2,k} + T \lambda_1 L^{\frac{1}{2}} \spowf{u_k - z_{1,k+1}}{1}{2} + \frac{\lambda_2 L T^2}{2} \xi_k  \\
            z_{2,k+1} &= z_{2,k} + T \lambda_2 L \xi_k \\
            \xi_k &\in \spow{u_k - z_{1,k+1}}{0}
        \end{align}
    \end{subequations}
    with output $y_{1,k} = z_{2,k+1}$.
    With regard to Definition~\ref{def:slidingmode}, this differentiator is in discrete-time sliding mode at time index $k$ if $z_{1,k+1} = u_k$, because then the set $\spow{u_k - z_{1,k+1}}{0} = \spow{0}{0} = [-1, 1]$ contains more than one element.
    Consider differentiation of the signal $f(t) = \lambda_2 L T t/2$ in the absence of noise with initial conditions $z_{1,1} = z_{2,1} = 0$.
    Note that $f \in \FL{1}$ holds regardless of $\lambda_2$, because $\ddot f(t) = 0$.
    With $u_k = f(kT) =  \lambda_2 L T^2 k/2$, it is easy to verify by substitution into \eqref{eq:diff:carvajal-rubio} that the corresponding solution is given by $z_{1,k+1} = u_k$ and
    \begin{equation}
        y_{1,k} = z_{2,k+1} = [1 - (-1)^k] \frac{\lambda_2 L T}{2}.
\end{equation}
    Obviously, this solution exhibits a chattering differentiation error $y_{1,k} - \dot f(kT) = \frac{\lambda_2 L T}{2} (-1)^{k+1} $.
\end{exmp}

\begin{exmp}
    \label{ex:ihdd}
    Consider the following family of sample-based second-order differentiators
    \begin{subequations}
        \label{eq:ihdd:family}
        \begin{align}
            z_{1,k+1} &= z_{1,k} + T \lambda_1 L^{\frac{1}{3}} \spowf{u_k - z_{1,k+1}}{2}{3} + T z_{2,k+1} + c \frac{T^2}{2} z_{3,k+1} \\
            z_{2,k+1} &= z_{2,k} + T \lambda_2 L^{\frac{2}{3}} \spowf{u_k - z_{1,k+1}}{1}{3} + T z_{3,k+1} \\
            z_{3,k+1} &\in z_{3,k} + T \lambda_3 L \spow{u_k-z_{1,k+1}}{0}
        \end{align}
    \end{subequations}
    with constant parameter $c \in \RR$ and outputs $y_{1,k} = z_{2,k+1}$, $y_{2,k} = z_{3,k+1}$.
    Note that for $c = 1$ and $c = 0$, respectively, the differentiator \eqref{eq:ihdd:family} corresponds to the implicit homogeneous discrete-time differentiator (I-HDD) and the implicit arbitrary-order super-twisting differentiator (I-AO-STD) studied by Mojallizadeh et al\cite{mojallizadeh2021time}.
    Consider now  differentiation of the signal $f(t) = \alpha t^2$ with arbitrary $\alpha \in \RR_{\ge 0}$ in absence of noise, leading to the measurement sequence $u_k = \alpha k^2 T^2$.
    Obviously, $f \in \FLz{2}$ for all $\alpha \ge 0$.
    One may verify that a solution of \eqref{eq:ihdd:family} is given by $z_{1,k+1} = u_k$ and
    \begin{equation}
        y_{1,k} = z_{2,k+1} = 2 \alpha k T - (1+c) \alpha T, \qquad
        y_{2,k} = z_{3,k+1} = 2\alpha.
    \end{equation}
    Obviously, the error $|y_{1,k} - \dot f(kT)| = |1+c| \alpha T$ is an unbounded function of $\alpha$, unless $c = -1$, which hence is the only differentiator among the family \eqref{eq:ihdd:family} that does not exhibit a bias error.
    For $c = -1$, however, another family of solutions is given by $z_{1,k+1} = u_k$ and
    \begin{equation}
        y_{1,k} = z_{2,k+1} = 2 \alpha k T + \gamma \frac{\lambda_3 L T^2}{4} (-1)^k, \qquad
        y_{2,k} = z_{3,k+1} = 2\alpha + \gamma \frac{\lambda_3 L T}{2} (-1)^k
    \end{equation}
    with arbitrary $\gamma \in [-1,1]$, leading to chattering differentiation errors $y_{1,k} - \dot f(kT) = \gamma \frac{\lambda_3 L T^2}{4} (-1)^k$, $y_{2,k} - \ddot f(kT) = \gamma \frac{\lambda_3 L T}{2} (-1)^k$ in general.
    Hence, both the I-HDD ($c = 1$) and the I-AO-STD ($c = 0$) exhibit a bias error.
    There also exists no alternative implicit implementation in the form \eqref{eq:ihdd:family} that exhibits neither bias errors nor chattering.
\end{exmp}

The previous example shows that the remedy proposed by Livne and Levant\cite{livne2014proper} for the forward Euler discretization---adding appropriate Taylor terms in the difference equations---does not work in the case of the implicit discretization without possibly encouraging chattering.
Hence, a different approach is required to obtain a proper implicit discretization of the robust exact differentiator, as discussed in the next section.

\subsection{Proper Implicit Discretization of the Robust Exact Differentiator}

In order to formally characterize the requirement for absence of the discretization chattering and bias errors that are present in the presented examples, the notion of a proper implicit discretization of \eqref{eq:diff:levant} was introduced by Seeber and Koch.\cite[Definition~1]{seeber2023structural}
Here, this notion is extended to the considered class of sample-based implicit sliding-mode differentiators in the form \eqref{eq:general:diff}.
In particular, a proper implicit discretization is defined by requiring that certain differentiation error bounds are eventually established once the differentiator is in discrete-time sliding mode.
\begin{defn}
    \label{def:proper}
    A sample-based implicit sliding-mode differentiator $\Diff{m}$ in the form \eqref{eq:general:diff} with sampling time $T \in \RR_{> 0}$ is said to be a \emph{proper implicit discretization} for signals in $\FL{m}$, if there exist constants $\mu_1, \ldots, \mu_m$ such that for all $M \in [0, L]$, $K \in \NN$, and for every (noise-free) input sequence $u_k = f(kT)$ with $f \in \FLb{m}$, the following implication is true:
    If the differentiator with input sequence $(u_k)$ is in discrete-time sliding mode for all $k \ge K$, then there exists an integer $\bar K \ge K$ such that the differentiator's output sequence $(\y_k) = \Diff{m}[ (u_k) ]$ fulfills
    \begin{equation}
        |y_{i,k} - f^{(i)}(kT)| \le \mu_i M T^{m+1-i}
    \end{equation}
    for all $k \ge \bar K$.
\end{defn}

By setting $M = 0$ in the above definition, it is obvious that a proper implicit discretization of an $m$th order robust exact differentiator, in particular, \emph{exactly} differentiates all polynomials with degree up to $m$ after a \emph{finite time}, provided that the sliding mode is attained in finite time.
As a consequence, neither bias nor chattering is present in such cases.
The present paper proposes a new implicit discretization of the arbitrary order robust exact differentiator that
\begin{itemize}
    \item 
        is a proper implicit discretization, and thus exhibits neither discretization chattering nor bias errors,
    \item
        is accompanied by closed-form stability conditions and differentiation error bounds,
    \item
        and is proven to converge in finite time subject to these conditions.
\end{itemize}

\section{Proposed Differentiator and Main Results}
\label{sec:proposed}

\subsection{Implicit Robust Exact Differentiator (IRED)}

In implicit form, the proposed $m$th order implicit robust exact differentiator (IRED) is given by
\begin{subequations}
    \label{eq:diff}
\begin{align}
    \label{eq:diff:zkp1i}
    z_{i,k+1} &= z_{i,k} + T \lambda_i L^{\frac{i}{m+1}} \spowf{u_k - z_{1,k+1}}{m-i+1}{m+1} + T z_{i+1,k+1} \qquad \text{for $i = 1, \ldots, m$} \\
    \label{eq:diff:zkp1n}
    z_{m+1,k+1} &\in z_{m+1,k} + T \lambda_{m+1} L \spow{u_k-z_{1,k+1}}{0}
    \intertext{with outputs}
    \label{eq:diff:y}
    y_{i,k} &= \sum_{j=i}^{m} T^{j-i} c_{i,j} z_{j+1,k+1} \qquad \text{for $i = 1, \ldots, m$} \end{align}
\end{subequations}
where the constants $c_{i,j}$ for $i,j \in \NN$ are recursively defined as
\begin{equation}
    \label{eq:cij}
    c_{i,j} = \frac{(j-1) c_{i,j-1} + i c_{i-1,j-1}}{j}
\end{equation}
with initial values $c_{0,0} = 1$ and $c_{0,j} = c_{i,0} = 0$ for $i,j\ne 0$.
Note that, for all $j \in \NN_0$, $c_{j,j} = 1$ and $c_{i,j} = 0$ holds for $i > j$.
Table~\ref{tab:cij} lists the values of the constants $c_{i,j}$ that are relevant for differentiator orders $m \le 6$.

\begin{rem}
    The crucial difference between the proposed IRED \eqref{eq:diff} and the I-AO-STD\cite{mojallizadeh2021time} is that the outputs of the former are not the state variables themselves, but rather appropriate linear combinations thereof.
    As will be shown, this yields a sample-based implicit sliding-mode differentiator without the drawbacks of the I-AO-STD (bias error) or of the HIDD\cite{carvajal2021implicit} and I-HDD\cite{mojallizadeh2021time} (discretization chattering).
\end{rem}

\begin{rem}
    The proposed approach can also be used to obtain a proper implicit discretization of the robust exact filtering differentiator proposed by Levant and Livne\cite{levant2020robust}.
Denoting the filtering order by $q \in \NN$ and the differentiation order by $m\in \NN$, the implicit robust exact filtering differentiator of order $m$ and filtering order $q$ is given by
\begin{subequations}
    \label{eq:diff:filt}
\begin{align}
    \label{eq:diff:filt:zkp1i}
    z_{i,k+1} &= z_{i,k} - T \lambda_i L^{\frac{i}{q+m+1}} \spowf{z_{1,k+1}}{q+m-i+1}{q+m+1} + T z_{i+1,k+1} \qquad \text{for $i = 1, \ldots, q-1, q+1, \ldots, q+m$} \\
    z_{q,k+1} &= z_{q,k} - T \lambda_q L^{\frac{q}{q+m+1}} \spowf{z_{1,k+1}}{m+1}{q+m+1} + T z_{q+1,k+1} - T u_k \\
    \label{eq:diff:filt:zkp1n}
    z_{q+m+1,k+1} &\in z_{q+m+1,k} - T \lambda_{q+m+1} L \spow{z_{1,k+1}}{0}
    \intertext{with outputs}
    \label{eq:diff:filt:y}
    y_{i,k} &= \sum_{j=i}^{m} T^{j-i} c_{i,j} z_{q+j+1,k+1} \qquad \text{for $i = 1, \ldots, m$.} \end{align}
\end{subequations}
All following results and proofs may be extended to this differentiator by means of straightforward modifications.
\end{rem}

\begin{table}
    \centering
    \renewcommand{\arraystretch}{1.4}\begin{tabular}{|c||c|c|c|c|c|c|c|}
        \hline
        \diagbox{$i$}{$j$} & 1 & 2 & 3 & 4 & 5 & 6 & 7 \\\hline\hline
        1 & $1$ & $\frac{1}{2}$ & $\frac{1}{3}$ & $\frac{1}{4}$ & $\frac{1}{5}$ & $\frac{1}{6}$ & $\frac{1}{7}$ \\
        \hline
        2 & $0$ & $1$ & $1$ & $\frac{11}{12}$ & $\frac{5}{6}$ & $\frac{137}{180}$ & $\frac{7}{10}$ \\
        \hline
        3 & $0$ & $0$ & $1$ & $\frac{3}{2}$ & $\frac{7}{4}$ & $\frac{15}{8}$ & $\frac{29}{15}$ \\
        \hline
        4 & $0$ & $0$ & $0$ & $1$ & $2$ & $\frac{17}{6}$ & $\frac{7}{2}$ \\
        \hline
        5 & $0$ & $0$ & $0$ & $0$ & $1$ & $\frac{5}{2}$ & $\frac{25}{6}$ \\
        \hline
        6 & $0$ & $0$ & $0$ & $0$ & $0$ & $1$ & $3$ \\
        \hline
    \end{tabular}
    \caption{Coefficients $c_{i,j}$ as defined in \eqref{eq:cij} for the IRED with differentiator order $m \le 6$}
    \label{tab:cij}
\end{table}

\subsection{Numerical Implementation}
\label{sec:implementation}

To obtain a numerical implementation of \eqref{eq:diff} in explicit form, obtain\cite{mojallizadeh2021time,carvajal2021implicit} by successive substitution from \eqref{eq:diff:zkp1i}--\eqref{eq:diff:zkp1n} the generalized equation
\begin{equation}
    z_{1,k+1} \in \lambda_{m+1} L T^{m+1} \spow{u_k - z_{1,k+1}}{0} + \sum_{i=1}^{m} T^i \lambda_i L^{\frac{i}{m+1}} \spowf{u_k - z_{1,k+1}}{m-i+1}{m+1} + \sum_{i=1}^{m+1}T^{i-1}z_{i,k}.
\end{equation}
From this relation, one may verify that $\rho_k = L^{-\frac{1}{m+1}} T^{-1} \spowf{u_k - z_{1,k+1}}{1}{m+1}$ satisfies the generalized equation
\begin{equation}
    \label{eq:alpha:inclusion}
0 \in \left( \spow{\rho_k}{m+1} + \lambda_1 \spow{\rho_k}{m} + \ldots + \lambda_{m} \rho_k + \lambda_{m+1} \spow{\rho_k}{0} \right) LT^{m+1} - b_k
\end{equation}
with
\begin{equation}
    \label{eq:bk}
    b_k = u_k - \sum_{i=1}^{m+1} T^{i-1} z_{i,k}.
\end{equation}
In case $|b_k| \le \lambda_{m+1} L T^{m+1}$, this generalized equation implies that $\rho_k = 0$.
Otherwise, $\rho_k = r_k \sign(b_k)$ where $r_k = |\rho_k|$ is a positive solution of the polynomial equation
\begin{equation}
    \label{eq:alpha:poly}
     r_k^{m+1} + \lambda_1 r_k^{m} + \ldots + \lambda_{m} r_k + \lambda_{m+1}  - \frac{|b_k|}{LT^{m+1}} = 0.
\end{equation}
For $\lambda_1, \ldots, \lambda_{m+1} \in \RR_{\ge 0}$ and $|b_k| > \lambda_{m+1} L T^{m+1}$, the left hand side of this equation is negative for $r_k = 0$ and is strictly increasing and unbounded for $r_k \to \infty$.
Hence, such a solution always exists and is unique.

In a numerical implementation, Newton's method or other root finding algorithms\cite{carvajal2021implicit,mojallizadeh2021time} may be used to approximate $r_k$.
In such a case, \eqref{eq:alpha:poly} is usually only satisfied up to some residual error, i.e.,
\begin{equation}
    \label{eq:alpha:poly:approx}
    \hat r_k^{m+1} + \sum_{i=1}^{m+1} \lambda_i \hat r_k^{m-i+1}  - \frac{|b_k|}{LT^{m+1}}  \in [-R,R]
\end{equation}
holds for the numerical approximation $\hat r_k$ of $r_k$, where the constant $R \in \RR_{> 0}$ is  a tuning parameter of the root finding method.
With $\hat r_k > 0$ satisfying \eqref{eq:alpha:poly:approx}, a numerical implementation then may compute an approximation $\hat \rho_k \approx L^{-\frac{1}{m+1}} T^{-1} \spowf{u_k - z_{1,k+1}}{1}{m+1}$ for $\rho_k$ as well as the state update \eqref{eq:diff:zkp1i}--\eqref{eq:diff:zkp1n} in decreasing order of the state variables as
\begin{subequations}
    \label{eq:diff:approx}
    \begin{align}
        \hat \rho_k &= \begin{cases}
            0 & \text{if $|b_k| \le \lambda_{m+1} L T^{m+1}$} \\
            \hat r_k \sign(b_k) & \text{otherwise},
        \end{cases} \\
        z_{m+1,k+1} &= \begin{cases}
            z_{m+1,k} + \frac{b_k}{T^{m}} & \text{if $|b_k| \le \lambda_{m+1} L T^{m+1}$} \\
            z_{m+1,k} + \lambda_{m+1} LT \sign(b_k) & \text{otherwise}
        \end{cases}, \\
        z_{i,k+1} &= z_{i,k} + T z_{i+1,k+1} + \lambda_i L T^{m-i+2} \spow{\hat \rho_k}{m-i+1}, \qquad \text{for $i = m, \ldots, 1$}.
    \end{align}
\end{subequations}
Finally, differentiator outputs $y_{1,k}, \ldots, y_{m,k}$ may be computed according to \eqref{eq:diff:y}.

The following proposition shows that this approximate numerical implementation behaves like an ideal implementation with additional noise of magnitude at most $R$ added to the measurements $u_k$.
\begin{prop}
    \label{prop:approximation}
    Let $m \in \NN$ and $L, R, T, \lambda_1, \ldots, \lambda_{m+1} \in \RR_{> 0}$.
    Consider the sample-based sliding-mode differentiator $\Diff{m}$ defined in \eqref{eq:diff} and its numerical implementation $\Diffh{m}$ with identical initial condition and output equations \eqref{eq:diff:y} but with state update according to \eqref{eq:diff:approx} wherein $\hat r_k \in \RR_{> 0}$ satisfies \eqref{eq:alpha:poly:approx} and $b_k$ is defined in \eqref{eq:bk}.
    Then, for every sequence $(u_k)$, there exists a sequence $(\epsilon_k)$ satisfying $|\epsilon_k| \le R L T^{m+1}$ such that the outputs of the two differentiators satisfy $\Diffh{m}[(u_k)]  = \Diff{m}[(u_k + \epsilon_k)]$.
    Moreover, $\epsilon_k = 0$ holds for all $k \in \NN$ where $\Diff{m}$ is in discrete-time sliding mode or, equivalently, where $|b_k| \le \lambda_{m+1} L T^{m+1}$.
\end{prop}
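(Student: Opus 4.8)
\section*{Proof proposal}

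The plan is to proceed by induction on the time index $k$, choosing the perturbations $\epsilon_k$ so that the state sequence $(\z_k)$ produced by the numerical implementation $\Diffh{m}$ on input $(u_k)$ coincides exactly with the state sequence produced by the ideal differentiator $\Diff{m}$ on the perturbed input $(u_k + \epsilon_k)$. Since both share the same initial condition and the same output map \eqref{eq:diff:y}, which depends only on the states $z_{j+1,k+1}$, equality of the state sequences immediately yields $\Diffh{m}[(u_k)] = \Diff{m}[(u_k + \epsilon_k)]$. Assuming as induction hypothesis that the two vectors $\z_k$ agree at step $k$, the quantity $b_k$ from \eqref{eq:bk} is the same for both (the ideal one seeing $\tilde b_k = b_k + \epsilon_k$), so it suffices to choose $\epsilon_k$ making both $z_{i,k+1}$ agree for $i = 1, \ldots, m+1$. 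Here I would invoke the reduction established around \eqref{eq:alpha:inclusion}--\eqref{eq:alpha:poly}: the state update of either differentiator is entirely determined by the scalar $\rho_k$ (resp.\ $\hat\rho_k$), since $\spowf{u_k - z_{1,k+1}}{m-i+1}{m+1} = L^{\frac{m-i+1}{m+1}} T^{m-i+1} \spow{\rho_k}{m-i+1}$ turns \eqref{eq:diff:zkp1i} into precisely the update rule \eqref{eq:diff:approx}. Equality of the states at step $k+1$ thus reduces to producing $\rho_k = \hat\rho_k$.

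For the sliding case $|b_k| \le \lambda_{m+1} L T^{m+1}$ I would take $\epsilon_k = 0$. Both implementations then set $\hat\rho_k = \rho_k = 0$ and $z_{m+1,k+1} = z_{m+1,k} + b_k/T^{m}$, and back-substitution through \eqref{eq:diff:zkp1i} with the $\lambda_i$-terms vanishing gives $z_{1,k+1} = u_k$, confirming both consistency of the update and that the differentiator is in discrete-time sliding mode. This simultaneously settles the ``moreover'' claim, since $\epsilon_k = 0$ holds exactly on these indices, which are precisely those with $|b_k| \le \lambda_{m+1} L T^{m+1}$.

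For the non-sliding case $|b_k| > \lambda_{m+1} L T^{m+1}$, the approximation uses $\hat\rho_k = \hat r_k \sign(b_k)$ with $\hat r_k > 0$ satisfying \eqref{eq:alpha:poly:approx}; I would write the residual as $\delta_k = \hat r_k^{m+1} + \sum_{i=1}^{m+1}\lambda_i \hat r_k^{m-i+1} - |b_k|/(L T^{m+1}) \in [-R,R]$ and set $\epsilon_k = \delta_k L T^{m+1}\sign(b_k)$, which gives $|\epsilon_k| \le R L T^{m+1}$ and $|\tilde b_k| = |b_k| + \delta_k L T^{m+1}$. The ideal differentiator on the perturbed input solves the exact equation \eqref{eq:alpha:poly} with $|b_k|$ replaced by $|\tilde b_k|$; by construction the constant term of that equation equals $\hat r_k^{m+1} + \sum_{i=1}^{m+1}\lambda_i \hat r_k^{m-i+1}$, so $\hat r_k$ is a positive root of it. Uniqueness of the positive root (established below \eqref{eq:alpha:poly}) then forces the ideal value $r_k$ to equal $\hat r_k$, whence the ideal $\rho_k = r_k\sign(\tilde b_k)$ coincides with $\hat\rho_k = \hat r_k\sign(b_k)$.

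The step I expect to be the main obstacle is verifying \emph{branch consistency}: one must ensure the perturbed input keeps the ideal differentiator in the non-sliding branch with the sign of $b_k$ intact, i.e.\ $\sign(\tilde b_k) = \sign(b_k)$ and $|\tilde b_k| > \lambda_{m+1} L T^{m+1}$. Both follow from one positivity observation: since all $\lambda_i > 0$ and $\hat r_k > 0$, every term of $\hat r_k^{m+1} + \sum_{i=1}^{m+1}\lambda_i \hat r_k^{m-i+1}$ is positive and the sum strictly exceeds its constant term $\lambda_{m+1}$. Hence $|\tilde b_k|/(L T^{m+1}) = \hat r_k^{m+1} + \sum_{i=1}^{m+1}\lambda_i \hat r_k^{m-i+1} > \lambda_{m+1} > 0$, so $|b_k| + \delta_k L T^{m+1} > 0$ keeps the sign of $b_k$, and $|\tilde b_k| > \lambda_{m+1} L T^{m+1}$ places the ideal differentiator in the non-sliding branch where uniqueness applies. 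With $\rho_k = \hat\rho_k$ secured in both cases, the shared update rule makes all $z_{i,k+1}$ agree---including the topmost state, where $\sign(\tilde u_k - z_{1,k+1}) = \sign(\rho_k) = \sign(b_k)$ fixes the set-valued term in \eqref{eq:diff:zkp1n}---which closes the induction.
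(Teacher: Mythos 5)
Your proposal is correct and follows essentially the same route as the paper's proof: split on whether $|b_k| \le \lambda_{m+1} L T^{m+1}$, take $\epsilon_k = 0$ in the sliding case, and otherwise absorb the residual of \eqref{eq:alpha:poly:approx} into $\epsilon_k$ so that $\hat r_k$ becomes the exact (unique) positive root for the perturbed $\hat b_k$, noting $|\hat b_k| > \lambda_{m+1} L T^{m+1}$ since $\hat r_k > 0$. You merely make explicit some details the paper leaves implicit (the formula $\epsilon_k = \delta_k L T^{m+1}\sign(b_k)$, the induction on the state sequences, and the branch/sign consistency check).
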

\begin{proof}
    For each $k \in \NN$, define $\hat b_k = b_k + \epsilon_k$ and distinguish the cases $|b_k| \le \lambda_{m+1} L T^{m+1}$ and $|b_k| > \lambda_{m+1} L T^{m+1}$.
    In the first case, let $\epsilon_k = 0$.
    In the second case, due to \eqref{eq:alpha:poly:approx}, there exists $\epsilon_k \in [-RLT^{m+1},RL T^{m+1}]$ such that
    \begin{equation}
        \label{eq:akhat:poly}
         \hat r_k^{m+1} + \sum_{i=1}^{m+1} \lambda_i \hat r_k^{m-i+1}  - \frac{|\hat b_k|}{LT^{m+1}}   = 0
    \end{equation}
    holds for $\hat b_k = b_k + \epsilon_k$.
    Additionally, $|\hat b_k| > \lambda_{m+1} L T^{m+1} \hat r_k^0 = \lambda_{m+1} L T^{m+1}$ holds in such case, because $\hat r_k > 0$.
With the sequence $(\epsilon_k)$ thus defined, let $\hat u_k = u_k + \epsilon_k$ and note that
    \begin{equation}
        \label{eq:bkhat}
        \hat b_k = b_k + \epsilon_k = u_k  + \epsilon_k - \sum_{i=1}^{m+1} T^{i-1} z_{i,k} = \hat u_k - \sum_{i=1}^{m+1} T^{i-1} z_{i,k}
    \end{equation}
	is obtained from \eqref{eq:bk}.
    Moreover, $|b_k| > \lambda_{m+1} L T^{m+1}$ holds if and only if $|\hat b_k| > \lambda_{m+1} L T^{m+1}$.
    The claim $\Diffh{m}[(u_k)] = \Diff{m}[(\hat u_k)]$ can then be seen to be true by comparing the relations \eqref{eq:alpha:poly}, \eqref{eq:bk}, which are satisfied by an ideal implementation of $\Diff{m}$, with the relations \eqref{eq:akhat:poly}, \eqref{eq:bkhat} holding for the approximate implementation $\Diffh{m}$.
\end{proof}

\begin{exmp}
    \label{exmp:ired3}
    Consider differentiation order $m = 3$.
    Then, the proposed IRED with input $u_k = f(kT) + \eta_k$ in implicit form is given by
    \begin{equation}
        \label{eq:ired3}
        \begin{aligned}
            z_{1,k+1} &= z_{1,k} + T \lambda_1 L^{\frac{1}{4}} \spowf{u_k - z_{1,k+1}}{3}{4} + T z_{2,k+1} \\
            z_{2,k+1} &= z_{2,k} + T \lambda_2 L^{\frac{2}{4}} \spowf{u_k - z_{1,k+1}}{2}{4} + T z_{3,k+1}
            & y_{1,k} &= z_{2,k+1} + \frac{T}{2} z_{3,k+1} + \frac{T^2}{3} z_{4,k+1}, \qquad
            \\
            z_{3,k+1} &= z_{3,k} + T \lambda_3 L^{\frac{3}{4}} \spowf{u_k - z_{1,k+1}}{1}{4} + T z_{4,k+1}
            & y_{2,k} &= z_{3,k+1} + T z_{4,k+1}, \qquad
            \\
            z_{4,k+1} &\in z_{4,k} + T \lambda_4 L \spow{u_k-z_{1,k+1}}{0}
            &y_{3,k} &= z_{4,k+1},
        \end{aligned}
    \end{equation}
    yielding estimates $y_{1,k}, y_{2,k}, y_{3,k}$ for the first three derivatives $f^{(1)}(kT), f^{(2)}(kT), f^{(3)}(kT)$ of the signal $f \in \FL{3}$.
For a numerical implementation, define
\begin{subequations}
    \begin{equation}
        b_k = u_k - z_{1,k} - T z_{2,k} - T^2 z_{3,k} - T^4 z_{4,k}, \qquad
        \hat \rho_k = \begin{cases}
            0 & \text{if $|b_k| \le \lambda_4 L T^4$} \\
            \hat r_k \sign(b_k) & \text{otherwise}
        \end{cases}
    \end{equation}
    where $\hat r_k$ is an (approximate) solution of the polynomial equation
    \begin{equation}
        L T^4 r_k^4 + \lambda_1 L T^4 r_k^3 + \lambda_2 L T^4  r_k^2 + \lambda_3 L T^4 r_k + \lambda_4 L T^4 - |b_k| = 0.
    \end{equation}
    Then, the state update and differentiator outputs may be computed according to
    \begin{equation}
        \begin{aligned}
            z_{4,k+1} &= 
            \begin{cases}
                z_{4,k} + \frac{b_k}{T^3} & \text{if $|b_k| \le \lambda_4 L T^4$} \\
                z_{4,k} + \lambda_4 L T \sign(b_k) & \text{otherwise},
            \end{cases} & y_{3,k} &= z_{4,k+1} \\
            z_{3,k+1} &= z_{3,k} + T z_{4,k+1} + \lambda_3 L T^2 \hat\rho_k  &
            y_{2,k} &= z_{3,k+1} + T z_{4,k+1} \\
            z_{2,k+1} &= z_{2,k} + T z_{3,k+1} + \lambda_2 L T^3 \spow{\hat\rho_k}{2} &
            y_{1,k} &= z_{2,k+1} + \frac{T}{2} z_{3,k+1} + \frac{T^2}{3} z_{4,k+1}
            \\
            z_{1,k+1} &= z_{1,k} + T z_{2,k+1} + \lambda_1 L T^4 \spow{\hat\rho_k}{3}.
        \end{aligned}
    \end{equation}
\end{subequations}
\end{exmp}

\subsection{Exactness Properties and Absence of Discretization Chattering}

An important property of the \emph{continuous-time} robust exact differentiator \eqref{eq:diff:levant} of order $m$ is its capability to differentiate signals in $\FL{m}$ exactly in the absence of noise.
It is clear that a \emph{sample-based} differentiator cannot achieve this for $L > 0$.
To characterize the best possible approximation of exactness, Seeber and Haimovich\cite{seeber202Xoptimal},  for the case of first-order differentiation, introduced the notion of quasi-exactness:
A first-order sample-based differentiator is called quasi-exact, if its worst case differentiation error upper bound in the absence of measurement noise is minimal among all sample-based differentiators.\cite[Definition~6.4]{seeber202Xoptimal}

The following theorem gives \emph{tight differentiation error bounds} of the proposed differentiator in the absence of measurement noise, which coincide with the quasi-exactness bound for the first order case.
Moreover, it shows that the proposed differentiator achieves exact differentiation of the polynomials $\FLz{m}$ in finite time \emph{without discretization chattering} when no measurement noise is present.
As the Examples~\ref{ex:hidd} and~\ref{ex:ihdd} show, the HIDD, the I-HDD, and the I-AO-STD, in contrast, do not have this property.
The proof of the theorem is given in Section~\ref{sec:proofs:main}.
\begin{thm}
    \label{th:exactness}
    Let $m \in \NN$ and $L, T, \lambda_1, \ldots, \lambda_{m+1} \in \RR_{> 0}$.
    Let $M \in [0, L]$ and consider the sample-based implicit sliding-mode differentiator defined in \eqref{eq:diff} with input $u_k = f(kT) + \eta_k$ wherein $f \in \FLb{m}$ and $(\eta_k)$ is a real-valued sequence .
    Suppose that $K \in \NN$ exists such that for all $k \ge K$, $\eta_k = 0$ holds and the differentiator is in discrete-time sliding mode at time index $k$, i.e., such that $z_{1,k+1} = u_k = f(kT)$ holds for all $k \ge K$.
    Then, the differentiator's outputs $(\y_k) = \Diff{m}[(u_k)]$ satisfy
    \begin{equation}
        \label{eq:differror:bound}
        |y_{i,k} - f^{(i)}(kT)| \le c_{i,m+1} M T^{m-i+1}
    \end{equation}
    for all $i=1,\ldots, m$ and all $k \ge K + m+1$, with constants $c_{i,m+1}$ as defined in \eqref{eq:cij}.
    Moreover, if $f(t) = M \frac{t^{m+1}}{(m+1)!}$, then the previous statement holds with equality in \eqref{eq:differror:bound}.
\end{thm}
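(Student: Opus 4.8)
The plan is to exploit the sliding-mode hypothesis to collapse the differentiator's dynamics to a linear recursion, identify the output map with a truncated backward finite-difference formula for the $i$-th derivative, and then control the truncation error by a Peano-type remainder whose extremal function is precisely the stated monomial.

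First I would show that, once the differentiator has stayed in discrete-time sliding mode for enough consecutive steps, every state variable becomes an exact backward difference of the input. Writing $\Delta g_k = g_k - g_{k-1}$ for the backward difference, the sliding-mode condition $z_{1,k+1}=u_k=f(kT)$ makes every power term $\spowf{u_k-z_{1,k+1}}{m-i+1}{m+1}$ in \eqref{eq:diff:zkp1i} vanish, so the state update reduces to $z_{i,k+1}=z_{i,k}+Tz_{i+1,k+1}$, i.e.\ $z_{i+1,k+1}=(z_{i,k+1}-z_{i,k})/T$, for $i=1,\ldots,m$. An induction on $i$ then yields $z_{i+1,k+1}=T^{-i}\Delta^i f(kT)$ for all $k\ge K+i$, the offset reflecting the $i$ consecutive sliding-mode steps needed to unfold the difference; in particular all variables entering the outputs are determined for $k\ge K+m$, which a fortiori covers the stated range $k\ge K+m+1$. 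Substituting into \eqref{eq:diff:y} gives the closed form $y_{i,k}=T^{-i}\sum_{j=i}^{m}c_{i,j}\Delta^j f(kT)$, turning the claim into a statement about a one-sided finite-difference approximation of $f^{(i)}$.

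The key algebraic fact I would establish is that the coefficients \eqref{eq:cij} are the Taylor coefficients of $\bigl(-\log(1-x)\bigr)^{i}=\sum_{j\ge i}c_{i,j}x^{j}$: the generating function $G_i(x)=\bigl(-\log(1-x)\bigr)^{i}$ satisfies $(1-x)G_i'(x)=iG_{i-1}(x)$, which reproduces \eqref{eq:cij} termwise. Since the backward shift $g(t)\mapsto g(t-T)$ equals $e^{-TD}$ on polynomials, one has $1-\Delta=e^{-TD}$, hence the operator identity $T^iD^i=\sum_{j\ge i}c_{i,j}\Delta^j$ on polynomials, where the series terminates because $\Delta$ is nilpotent there. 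As $\Delta^{j}$ kills every polynomial of degree $<j$, this identity shows the truncated operator $T^{-i}\sum_{j=i}^{m}c_{i,j}\Delta^{j}$ reproduces $f^{(i)}$ exactly for $f\in\mathbb{P}_m$; equivalently, the error functional $E_i[f]:=y_{i,k}-f^{(i)}(kT)$ annihilates $\mathbb{P}_m$.

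With exactness on $\mathbb{P}_m$ in hand, I would pass to general $f\in\FLb{m}$ through Taylor's theorem with integral remainder about $t=kT$: the degree-$m$ part is annihilated, leaving the Peano representation $E_i[f]=\int_{(k-m)T}^{kT}K_i(s)\,f^{(m+1)}(s)\,\diffd s$ with $K_i(s)=E_i\bigl[(\cdot-s)_+^{m}/m!\bigr]$, so that $|E_i[f]|\le M\norm[1]{K_i}$. The constant is pinned down by the equality case: truncating the operator identity at $j=m+1$ is exact for $f(t)=Mt^{m+1}/(m+1)!$, whence $E_i[f]=-T^{-i}c_{i,m+1}\Delta^{m+1}f(kT)=-c_{i,m+1}MT^{m-i+1}$, using that the $(m+1)$-th backward difference of this monomial equals $MT^{m+1}$; this already establishes the equality assertion. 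The main obstacle is the remaining inequality, i.e.\ showing this monomial is the worst case. I expect it to reduce to proving that the Peano kernel $K_i$ has constant sign on $[(k-m)T,kT]$, for then $\norm[1]{K_i}=\bigl|\int K_i\,\diffd s\bigr|=c_{i,m+1}T^{m-i+1}$ and \eqref{eq:differror:bound} follows for every admissible $f$. Sign-definiteness of $K_i$ is the crux; I would establish it either by an induction on $i$ that mirrors the recursion \eqref{eq:cij}, or by recognising $K_i$ as a one-signed monospline associated with the backward divided differences, as is standard for such one-sided differentiation formulas.
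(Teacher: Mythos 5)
Your reduction to the finite-difference identity is exactly the paper's first step (there phrased as ``the error states $x_{i,k}=z_{i,k}-g^f_{i,k}$ vanish after $m+1$ sliding steps,'' via Lemma~\ref{lem:deadbeat}), and your generating-function identification $\sum_j c_{i,j}x^j=(-\log(1-x))^i$ is a correct, clean alternative to the paper's identification $c_{i,j}=w_j^{(i)}(0)$ with the Newton polynomials $w_j(x)=\frac{1}{j!}\prod_{p=0}^{j-1}(x+p)$; the resulting polynomial-exactness on degree $\le m$ and the equality computation for $f(t)=Mt^{m+1}/(m+1)!$ are both right. The problem is the last step. You correctly isolate the crux --- that the Peano kernel $K_i$ of the error functional must be one-signed so that $\norm[1]{K_i}=\abs{\int K_i}=c_{i,m+1}T^{m+1-i}$ --- but you do not prove it, and the two strategies you gesture at are not routine. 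This is precisely the point where the paper does something nontrivial: it recognizes $\sum_{j=i}^m T^{j-i}c_{i,j}g^f_{j+1,k+1}$ as $p^{(i)}(kT)$ for the Newton interpolant $p$ of $f$ at the nodes $kT,\ldots,(k-m)T$, and then invokes Shadrin's Theorem~A on derivatives of the Lagrange interpolation error, $\norm[\infty]{(f-p)^{(i)}}\le\frac{M}{(m+1)!}\norm[\infty]{\omega^{(i)}}$, together with the fact that for equidistant nodes $\abs{\omega^{(i)}}$ is maximized at the endpoint, giving exactly $c_{i,m+1}MT^{m+1-i}$. That sup-norm inequality was a long-standing conjecture settled only in 1995 for $2\le i\le m-1$, so the sharp constant for general $f\in\FLb{m}$ is genuinely deep; describing the required sign-definiteness of $K_i$ as ``standard'' for all $i$ understates what remains to be done. (For $i=1$ and $i=m$ it is indeed classical; for intermediate $i$ it is not.)

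To close the gap you would need either (a) an actual proof that $K_i(s)=E_i\bigl[(\cdot-s)_+^m/m!\bigr]$ does not change sign on $[(k-m)T,kT]$ --- which, while plausibly true (it checks out for $m\le 2$), requires a monospline/total-positivity argument that you have not supplied --- or (b) a citation of a precise published result covering the pointwise derivative error at an endpoint node for all orders $i$, which is effectively what the paper does with Shadrin. Everything before that point is sound, and your equality case stands on its own since it is a direct computation; only the inequality \eqref{eq:differror:bound} for general $f$ is unproven as written.
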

\begin{rem}
    This theorem shows that the proposed differentiator \eqref{eq:diff} is a proper implicit discretization in the sense of Definition~\ref{def:proper}.
Indeed, an equivalent, purely state-based implementation of the proposed differentiator which fulfills the corresponding necessary and sufficient structural conditions derived by Seeber and Koch\cite[Theorem 3.2]{seeber2023structural} may be obtained from \eqref{eq:diff:zkp1i}--\eqref{eq:diff:zkp1n} by choosing the state variables $\zeta_{1,k} = z_{1,k}$, and $\zeta_{2,k} = y_{1,k-1}, \ldots, \zeta_{m+1,k} = y_{m,k-1}$, i.e., by means of the state transform
    \begin{equation}
        \begin{bmatrix}
            \zeta_{1,k} \\
            \zeta_{2,k} \\
            \vdots \\
            \zeta_{m+1,k}
        \end{bmatrix} = \begin{bmatrix}
            1 & 0 & 0 & 0 &  \ldots &  0 \\
            0 & 1 & T c_{1,2} & T^2 c_{1,3} & \ldots & T^{m-1} c_{1,m} \\
            0 & 0 & 1 & T c_{2,3} & \ldots & T^{m-2} c_{2,m} \\
            \vdots & \ddots & \ddots &  \ddots & \vdots & \vdots \\
0 & 0 & \ddots & \ddots & 1 & T c_{m-1,m} \\
            0 & 0 & 0 & 0 & \ldots & 1
        \end{bmatrix} \begin{bmatrix}
            z_{1,k} \\
            z_{2,k} \\
            \vdots \\
            z_{m+1,k}
        \end{bmatrix}
    \end{equation}
    with constants $c_{i,j}$  defined in \eqref{eq:cij}.
    The estimates for $\dot f(kT), \ldots, f^{(m)}(kT)$ are given by the states $\zeta_{2,k+1}, \ldots, \zeta_{m+1,k+1}$ in that case.
\end{rem}

As a special case of Theorem~\ref{th:exactness}, the following corollary shows that the implicit super-twisting differentiator (I-STD)\cite{mojallizadeh2021time}, which corresponds to the first-order IRED, is quasi-exact  whenever it converges into discrete-time sliding mode.
\begin{cor}
    \label{cor:quasiexact}
    Let $L, T, \lambda_1, \lambda_2 \in \RR_{> 0}$ and consider the implicit super-twisting differentiator
    \begin{subequations}
        \label{eq:istd}
        \begin{align}
            z_{1,k+1} &= z_{1,k} + \lambda_1 L T \spowf{u_k - z_{1,k+1}}{1}{2} + T z_{2,k+1} \\
            z_{2,k+1} &\in z_{2,k} + \lambda_2 L T \spow{u_k - z_{1,k+1}}{0} &
            y_{1,k} &= z_{2,k+1}
        \end{align}
    \end{subequations}
    with initial condition $z_{1,1}, z_{2,1} \in \RR$.
    Suppose that for all $f \in \FL{1}$ there exists $K \in \NN$, depending only on $f(0), \dot f(0)$, such that the differentiator with input $u_k = f(kT)$ is in discrete-time sliding mode, i.e., $z_{1,k+1} = u_k$, for all $k \ge K$.
    Then, the differentiator \eqref{eq:istd} is quasi-exact in finite time.
\end{cor}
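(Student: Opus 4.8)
The plan is to obtain the corollary as the special case $m = 1$ of Theorem~\ref{th:exactness}, combined with the characterization of the first-order quasi-exactness bound by Seeber and Haimovich.\cite{seeber202Xoptimal}

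First I would reconcile the I-STD \eqref{eq:istd} with the first-order IRED \eqref{eq:diff}: whenever the differentiator is in discrete-time sliding mode one has $z_{1,k+1} = u_k$, whence $u_k - z_{1,k+1} = 0$ and the term $\spowf{u_k - z_{1,k+1}}{1}{2}$ vanishes. The precise coefficient of this term is therefore immaterial in sliding mode, while the remaining state equation for $z_{2,k+1}$ and the output $y_{1,k} = z_{2,k+1} = c_{1,1} z_{2,k+1}$ (recall $c_{1,1} = 1$) coincide with \eqref{eq:diff} for $m = 1$. Since the conclusion of Theorem~\ref{th:exactness} concerns only the sliding-mode behaviour, it applies verbatim to \eqref{eq:istd}.

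Next I would invoke the hypothesis that, for every $f \in \FL{1}$, a finite $K \in \NN$ depending only on $f(0), \dot f(0)$ exists after which the differentiator is in sliding mode with $\eta_k = 0$. Applying Theorem~\ref{th:exactness} with $m = 1$ and the worst-case choice $M = L$ then gives
\begin{equation}
    |y_{1,k} - \dot f(kT)| \le c_{1,2} L T = \frac{LT}{2}
    \qquad \text{for all } k \ge K + 2,
\end{equation}
where $c_{1,2} = \frac{1}{2}$ is read off from Table~\ref{tab:cij}. Hence the worst-case differentiation error bound of \eqref{eq:istd} over $\FL{1}$ equals $\frac{LT}{2}$ and is established after the finite transient $K + 2$.

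It remains to match this value to the optimal bound. By the optimality result accompanying \cite[Definition~6.4]{seeber202Xoptimal}, the minimal worst-case differentiation error attainable by any first-order sample-based differentiator for signals in $\FL{1}$ is exactly $\frac{LT}{2}$. Since \eqref{eq:istd} attains an error bound of $\frac{LT}{2}$ in finite time and no differentiator can undercut this minimal value, the worst-case bound of \eqref{eq:istd} is minimal among all sample-based differentiators, i.e.\ \eqref{eq:istd} is quasi-exact, and the finiteness of $K + 2$ upgrades this to quasi-exactness in finite time. The main obstacle is to ensure that the worst-case error notion underlying the bound in \cite{seeber202Xoptimal} agrees with the one controlled by Theorem~\ref{th:exactness}; this is secured by the equality statement of that theorem for $f(t) = L \frac{t^2}{2}$, which shows that \eqref{eq:istd} actually realizes the bound $\frac{LT}{2}$ and hence sits exactly at the optimum rather than strictly below it.
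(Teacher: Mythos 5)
Your proposal is correct and follows essentially the same route as the paper: apply Theorem~\ref{th:exactness} with $m=1$ and $M=L$ to obtain the bound $c_{1,2}LT = \tfrac{LT}{2}$ after a finite time depending only on the initial condition, and match this to the quasi-exactness bound of \cite[Definition~6.4]{seeber202Xoptimal}. Your additional remarks---reconciling the coefficient scaling of \eqref{eq:istd} with the first-order IRED and using the equality case to confirm tightness---are sound elaborations of steps the paper leaves implicit.
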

\begin{proof}
    Applying Theorem~\ref{th:exactness} for $m = 1$ and $M = L$ yields that $|y_{1,k} - \dot f(kT)| \le c_{1,2} L T = \frac{LT}{2}$ holds after a finite time that depends only on the initial condition, fulfilling the definition\cite[Definition~6.4]{seeber202Xoptimal} of quasi-exactness in finite time.
\end{proof}

\subsection{Stability Conditions and Robustness to Noise}

The next theorem shows stability conditions and differentiation error bounds for the proposed differentiator.
In fact, it is the first time that closed-form stability conditions and error bounds for a discrete-time implementation of the robust exact differentiator with \emph{arbitrary} differentiation order are presented.
Its proof is given in Section~\ref{sec:proofs:main}.
\begin{thm}
    \label{th:main}
    Let $m \in \NN$, $L,T \in \RR_{> 0}$, and $a_1, \ldots, a_m \in (1, 2)$.
    Define $\beta_1 = 1$, $\gamma_0 = \gamma_1 = 2$ and recursively define further constants $\beta_2, \ldots, \beta_{m+1}$, $\gamma_2, \ldots, \gamma_{m+1}$, and $\mu_1, \ldots, \mu_m$ via
    \begin{align}
        \beta_{j+1} &= \left(\beta_{j}^{j} + \frac{a_{j}}{\gamma_{j}^{j}}\right)^{\frac{1}{j}}, &
\gamma_{j+1} &= \left(\frac{2 }{2 - a_{j}}\right)^{\frac{1}{j}} \gamma_{j}, &
        \label{eq:muj}
        \mu_j = \frac{j+1}{j}\cdot \frac{\gamma_{j}^{j}}{\gamma_{j-1}^{j-1}}\cdot \frac{\beta_{j+1}}{a_{j} -1}
    \end{align}
    for $j = 1, \ldots, m$.
    Additionally, define $\lambda_0 = 1$.
Consider the sample-based implicit sliding-mode differentiator $\Diff{m}$ defined in \eqref{eq:diff} with initial values $z_{1,1}, z_{2,1}, \ldots, z_{m+1,1} \in \RR$ and suppose that its parameter $\lambda_1, \ldots, \lambda_{m+1} \in \RR_{> 0}$ satisfy $\lambda_{m+1} > 1$ and
    \begin{equation}
        \label{eq:cond}
        \frac{\lambda_{m-j+1}}{\lambda_{m-j}} > \frac{\lambda_{m-j+2}}{\lambda_{m-j+1}} \mu_{j} \qquad\text{for $j = 1, \ldots, m$}.
    \end{equation}
    Then, for every $f \in \FL{m}$ and every $(\eta_k) \in \EN$, there exists a finite integer $K \in \NN$ independent of $f$ and $(\eta_k)$ except for the initial conditions $f(0), f^{(1)}(0), \ldots, f^{(m)}(0)$ and the noise bound $N$ such that, when applying the input $u_k = f(kT) + \eta_k$ to the differentiator, its outputs $(\y_k) = \Diff{m}[(u_k)]$ satisfy the inequalities
    \begin{equation}
        |y_{i,k} - f^{(i)}(kT)| \le c_{i,m+1} L \left( T + d_i \sqrt[m+1]{\frac{N}{L}} \right)^{m-i+1} \quad
        \text{with}
        \quad
        \label{eq:di}
        d_i = \max_{p = 1, \ldots, m-i+1} \frac{\beta_p \gamma_{m+1}}{\sqrt[m+1]{2}} \sqrt[p]{\frac{\lambda_{m-p+1}c_{i,m-p+1} }{\binom{m-i+1}{p}c_{i,m+1} }}
    \end{equation}
    for all $i=1,\ldots, m$ and all $k \ge K$, with constants $c_{i,j}$ as defined in \eqref{eq:cij}.
    If, additionally, $N \le \bar N$  holds with
    \begin{equation}
        \label{eq:Nbar}
        \bar N = \min_{p=0, \ldots, m} \frac{L T^{m+1}}{2^{m} \beta_{m-p+1}^{m+1} \gamma_{m+1}^{m+1}} \left( \frac{\lambda_{m+1}-1}{\lambda_p} \right)^{\frac{m+1}{m-p+1}},
    \end{equation}
    then the differentiator moreover is in discrete-time sliding mode (i.e., $z_{1,k+1} = u_k$ holds) for all $k \ge K$.
\end{thm}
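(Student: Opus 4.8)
The plan is to run a discrete-time Lyapunov analysis mirroring the closed-form continuous-time construction of \cite{seeber2023closed}, exploiting the homogeneity of the update \eqref{eq:diff}. First I would pass to the equivalent state coordinates $\zeta_{1,k}, \ldots, \zeta_{m+1,k}$ from the remark following Theorem~\ref{th:exactness}, in which the outputs are themselves states, and introduce scaled error variables $e_{j,k}$ measuring the deviation of each $\zeta_{j,k}$ from the corresponding derivative $f^{(j-1)}(kT)$, normalized by its natural homogeneous scale of order $L T^{m+1-j}$ so that the one-step map becomes self-similar. The nonlinearity enters only through the scalar $\rho_k$ of \eqref{eq:alpha:inclusion}, which, by the monotonicity discussed below \eqref{eq:alpha:poly}, is a single-valued, sign-definite function of $b_k$ in \eqref{eq:bk} whenever $\abs{b_k} > \lambda_{m+1} L T^{m+1}$ and is zero otherwise. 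The constraints $\abs{f^{(m+1)}} \le L$ and $\abs{\eta_k} \le N$ appear as a bounded perturbation of the last error equation whose homogeneous magnitude combines a sampling part of order $L T^{m+1}$ with a noise part of order $N$.

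Next I would assemble a nested Lyapunov function $V_k$ as a weighted sum of per-layer contributions, layer $j$ being governed by the constants $\beta_j$ and $\gamma_j$ of \eqref{eq:muj}, with the design parameters $a_j \in (1,2)$ balancing the convergence rate against the admissible gain ratio. The core of the proof is a one-step estimate of the form $V_{k+1} \le V_k$ minus a positive-definite term plus a residual set by the perturbation scale, derived recursively over the layers: at layer $j$ the hypothesis \eqref{eq:cond}, namely $\lambda_{m-j+1}/\lambda_{m-j} > (\lambda_{m-j+2}/\lambda_{m-j+1}) \mu_j$, ensures that the stabilizing correction dominates the error propagated from the layer below, with $\mu_j$ being precisely the worst-case gain to be overcome. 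I expect this step to be the main obstacle: in contrast to the continuous-time case one cannot differentiate $V$ along a flow but must bound a finite difference across the implicit, set-valued, nonsmooth map, which requires exploiting the monotonicity of \eqref{eq:alpha:poly} to sign-control $\rho_k$ and carefully tracking the homogeneity-breaking effect of the fixed step $T$.

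From the one-step decrease I would deduce finite-time entry into, and forward invariance of, a sublevel set of $V$ whose radius is governed by the effective scale $T + d_i \sqrt[m+1]{N/L}$, in which the sampling contributes $T$ and the noise its Levant-type accuracy scaling $\sqrt[m+1]{N/L}$. The number of steps needed depends only on the initial value of $V$, hence on $f(0), \ldots, f^{(m)}(0)$ and on $N$, which yields the claimed integer $K$. Mapping this sublevel set back to the original errors $y_{i,k} - f^{(i)}(kT)$ through the transform of the remark and the homogeneous weights reproduces \eqref{eq:di}; the leading constant $c_{i,m+1}$ is fixed by requiring the bound to collapse, as $N \to 0$, onto the tight estimate of Theorem~\ref{th:exactness}, while the maximum over $p$ in the definition of $d_i$ selects the worst layer feeding the $i$-th output.

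Finally, for the sliding-mode claim I would show that once $V_k$ lies in the invariant set and $N \le \bar N$, the quantity $b_k$ of \eqref{eq:bk} obeys $\abs{b_k} \le \lambda_{m+1} L T^{m+1}$, which by the discussion following \eqref{eq:alpha:poly} forces $\rho_k = 0$ and hence $z_{1,k+1} = u_k$, i.e.\ discrete-time sliding mode in the sense of Definition~\ref{def:slidingmode}. Estimating $\abs{b_k}$ layer by layer and requiring this inequality to hold uniformly over $p = 0, \ldots, m$ produces exactly \eqref{eq:Nbar}, the exponent $\frac{m+1}{m-p+1}$ arising from rescaling the $p$-th layer error into the position-level units of $b_k$.
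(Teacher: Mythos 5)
Your overall strategy---a recursive, homogeneity-respecting discrete-time Lyapunov analysis with per-layer constants $\beta_j,\gamma_j$ and the gain condition \eqref{eq:cond} guaranteeing layer-by-layer domination, followed by finite-time entry into an invariant sublevel set and a bound on $b_k$ to establish the sliding mode---is the same route the paper takes. However, there is a genuine gap in your setup of the error system. You measure the error of each state against the true derivative $f^{(j-1)}(kT)$ and then assert that the signal and noise constraints enter only as a bounded perturbation of the \emph{last} error equation. With derivative references this is false: the exact increment $f^{(j)}(kT)-f^{(j)}((k-1)T)$ differs from $T f^{(j+1)}((k-1)T)$ by Taylor remainders involving the \emph{intermediate} derivatives $f^{(j+2)},\ldots,f^{(m)}$, which are not bounded by the problem data (only $f^{(m+1)}$ is bounded by $L$). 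The paper's essential device, which your plan is missing, is to take as reference trajectory the backward \emph{divided differences} $g^f_{i,k}$ of the extension \eqref{eq:fbar}, defined in \eqref{eq:gf}: these satisfy the discrete integrator-chain relations exactly, so the error system \eqref{eq:diff:errorsystem} carries a single perturbation $\delta_k=g^f_{n+1,k}$ with $|\delta_k|\le L$ in the last equation, exactly as you want but cannot get in your coordinates. The price is that the states then track divided differences rather than derivatives, and the conversion back to derivative estimates is a separate step (Lemma~\ref{lem:errorbound}, a Newton-interpolation error bound) which is precisely where the leading term $c_{i,m+1} L T^{m+1-i}$ of \eqref{eq:di} comes from. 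Your proposal instead ``fixes'' $c_{i,m+1}$ by demanding consistency with Theorem~\ref{th:exactness} as $N\to 0$; that is reverse-engineering the constant, not deriving it, and without the interpolation-error lemma the $M$-dependent part of the bound has no proof.

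Two smaller points. First, the paper's Lyapunov function is a recursive \emph{maximum} of per-layer terms, not a weighted sum; the case distinction over which term attains the maximum is what makes the one-step decrease (Lemma~\ref{lem:main}) tractable across the implicit, set-valued map, and it is not clear that a weighted sum admits the same recursion. Second, your mechanism for the sliding-mode claim (bounding $|b_k|$ by $\lambda_{m+1}LT^{m+1}$ so that \eqref{eq:alpha:poly} forces $\rho_k=0$) is correct in substance and matches the paper's Lemma~\ref{lem:invariant}, and your reading of the exponent in \eqref{eq:Nbar} as a homogeneous rescaling of the $p$-th layer into position-level units is accurate.
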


\begin{exmp}
    Consider differentiation order $m = 1$.
    Then, the theorem yields the stability conditions $\lambda_1^2 > \mu_1 \lambda_2$, $\lambda_2 > 1$ with
    \begin{equation}
        \mu_1 = 2 \gamma_1 \frac{\beta_2}{a_1 - 1} = \frac{2 a_1 + 4}{a_1 - 1}
    \end{equation}
    and $a_1 \in (1,2)$.
    Since $\inf_{a_1 \in (1,2)} \frac{2 a_1 + 4}{a_1 - 1} = 8$, this condition is satisfied for some $a_1$ whenever $\lambda_1^2 > 8 \lambda_2$, $\lambda_2 > 1$.
\end{exmp}
\begin{rem}
    For differentiation orders $m > 1$, the following tuning rule\cite[Remark 3]{seeber2023closed} may be used to obtain parameters that satisfy the theorem's conditions:
    After computing $\mu_1, \ldots, \mu_m$ as defined in the theorem, select values $\bar \mu_1, \ldots, \bar \mu_m$ each satisfying $\bar \mu_j > \mu_j$.
    Then, for any given $\lambda_{m+1} > 1$, admissible parameters $\lambda_1, \ldots, \lambda_m$ satisfying \eqref{eq:cond} may be computed according to
    \begin{equation}
        \lambda_j = \lambda_{m+1}^{\frac{j}{m+1}} \frac{\prod_{k=m-j+1}^{m} \prod_{i=1}^{k} \bar \mu_i}{\left( \prod_{k=1}^m \prod_{i=1}^k \bar \mu_i\right)^{\frac{j}{m+1}}}
    \end{equation}
    for $j = 1, \ldots, m$.
\end{rem}
\begin{rem}
    For the noise-free case $N = 0$, this theorem may be combined with Theorem~\ref{th:exactness} to obtain that also the (possibly tighter) bounds \eqref{eq:differror:bound} are established after a finite time.
    For differentiation order $m = 1$, the above example furthermore shows that $\lambda_1 > \sqrt{8 \lambda_2}$, $\lambda_2 > 1$ is sufficient for the condition of the corresponding Corollary~\ref{cor:quasiexact} to be satisfied.
\end{rem}
\begin{rem}
    From Proposition~\ref{prop:approximation}, one can see that when replacing $N$ by $N+R L T^{m+1}$ in this theorem, it applies also to the approximate numerical implementation \eqref{eq:bk}, \eqref{eq:alpha:poly:approx}, \eqref{eq:diff:approx}, \eqref{eq:diff:y} of the IRED.
    Provided that $R L T^{m+1}\le \bar N$, it can furthermore be seen that such an approximate implementation also eventually attains the discrete-time sliding mode in the noise-free case $N = 0$.
    This shows that the bound \eqref{eq:differror:bound} from Theorem~\ref{th:exactness} is valid also for the numerical implementation of the IRED provided that $R$ is sufficiently small.
\end{rem}

\section{Stability Analysis}
\label{sec:stability}

For notational convenience, $n = m+1$ denotes the system order of the differentiator throughout this section.

\subsection{Differentiation Error System}

To introduce the error system, first define an extension $\bar f : \RR \to \RR$  of $f \in \FL{m}$ to negative values of $t$ as
\begin{equation}
    \label{eq:fbar}
    \bar f(t) = \begin{cases}
        f(t) & t \ge 0 \\
        f(0) + \sum_{j=1}^{m} \frac{t^{j}}{j!} f^{(j)}(0) & t < 0.
    \end{cases}
\end{equation}
Obviously, $\bar f^{(m+1)}(t) = 0$ for $t < 0$ and hence $|\bar f^{(m+1)}| \le L$ almost everywhere on $\RR$.
For given $f$, recursively define corresponding divided differences $g^f_{i,k}$ for $i = 1, \ldots, m+2$ and all integers $k \in \ZZ$ via the relations
\begin{equation}
    \label{eq:gf}
    g^f_{1,k+1} = \bar f(kT) \qquad \text{and} \qquad
    g^f_{i+1, k+1} = \frac{g^{f}_{i,k+1} - g^{f}_{i,k}}{T} \qquad \text{for $i=1,\ldots, m+1$}.
\end{equation}
Now, define the error variables $x_{i,k} = z_{i,k} - g^{f}_{i,k}$ for $i = 1, \ldots, n = m+1$, aggregated in the vector $\x_k = \begin{bmatrix} x_{1,k} & \ldots & x_{n,k} \end{bmatrix}\TT$.
Noting that $g^{f}_{1,k+1} = f(kT) = u_k - \eta_k$ for $k \ge 0$ and from \eqref{eq:diff:zkp1i}--\eqref{eq:diff:zkp1n}, these can be seen to satisfy
\begin{subequations}
    \label{eq:diff:errorsystem}
    \begin{align}
    \label{eq:diff:errorsystem:i}
        x_{i,k+1} &= x_{i,k} - T L^{\frac{i}{n}} \lambda_i \spowf{x_{1,k+1}- \eta_k}{n-i}{n} + T x_{i+1,k+1} \qquad \text{for $i = 1, \ldots, n-1$} \\
        x_{n,k+1} &\in x_{n,k} - T L \lambda_{n} \spow{x_{1,k+1}-\eta_k}{0} - T \delta_k
    \end{align}
\end{subequations}
with $\delta_k = g^{f}_{n+1,k}$.
It is well known\cite{curtiss1962limits,livne2014proper} that $|\bar f^{(m+1)}| \le L$ holding almost everywhere implies $|\delta_k| = |g^{f}_{n+1,k}| \le L$ for all $k \in \NN$.

The following lemma shows a forward invariant set in which the system operates in sliding mode for sufficiently small measurement noise.
It and all following lemmata are proven in the appendix.
\begin{lem}
    \label{lem:invariant}
    Let $n \in \NN$, $n \ge 2$ and $L, T, \lambda_1, \ldots, \lambda_n \in \RR_{> 0}$.
    Define the set
    \begin{equation}
        \label{eq:invariant}
        \Omega = \{ \x \in \RR^n : 2^{n-i+1} T^{i-1} \abs{x_i} \le L T^n (\lambda_n - 1) \text{ for $i = 1, \ldots, n$} \},
    \end{equation}
    and let $N \in [0, 2^{-n} L T^n (\lambda_n-1))$.
    Consider solutions of system \eqref{eq:diff:errorsystem} with $|\eta_k| \le N$ and $|\delta_k| \le L$ for all $k\ge 1$.
    Then, for all $\bar K \in \NN$, $\x_{\bar K} \in \Omega$ implies that $\x_{\bar K+1} \in \Omega$ and that $x_{1,\bar K+1} = \eta_{\bar K}$.
\end{lem}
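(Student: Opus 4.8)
The plan is to construct the unique successor state explicitly under the assumption that the differentiator is in discrete-time sliding mode, to verify both that this construction is a genuine solution of the inclusion~\eqref{eq:diff:errorsystem} and that it lies in $\Omega$, and then to upgrade the construction to a statement about the actual solution by invoking the uniqueness of solutions. The starting observation is that the sliding condition $x_{1,\bar K+1} = \eta_{\bar K}$ is precisely $x_{1,\bar K+1}-\eta_{\bar K}=0$, which makes every fractional-power term $\spowf{x_{1,\bar K+1}-\eta_{\bar K}}{n-i}{n}$ in~\eqref{eq:diff:errorsystem:i} vanish (their exponents $(n-i)/n$ are strictly positive) and turns the set-valued sign in the last equation into $[-1,1]$. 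I would therefore define a candidate by setting $x_{1,\bar K+1}=\eta_{\bar K}$ and propagating the remaining components through the rearranged first $n-1$ equations, $x_{i+1,\bar K+1} = (x_{i,\bar K+1}-x_{i,\bar K})/T$ for $i=1,\ldots,n-1$; by construction these $n-1$ equations hold.

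Writing $\omega_i = 2^{-(n-i+1)}T^{-(i-1)}LT^n(\lambda_n-1)$ for the bound that $\Omega$ imposes on $\abs{x_i}$, and noting the scaling $\omega_{i+1} = (2/T)\,\omega_i$, I would then prove $\abs{x_{i,\bar K+1}}\le\omega_i$ by induction on $i$. The base case is $\abs{x_{1,\bar K+1}}=\abs{\eta_{\bar K}}\le N<\omega_1$, which is exactly the hypothesis $N\in[0,2^{-n}LT^n(\lambda_n-1))$. For the inductive step, the defining recursion together with $\abs{x_{i,\bar K}}\le\omega_i$ (from $\x_{\bar K}\in\Omega$) and the inductive bound gives $\abs{x_{i+1,\bar K+1}}\le(\abs{x_{i,\bar K+1}}+\abs{x_{i,\bar K}})/T\le 2\omega_i/T=\omega_{i+1}$. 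This establishes $\x_{\bar K+1}\in\Omega$.

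It remains to check that the candidate also satisfies the final, set-valued equation, i.e.\ that the required selection $\xi = (x_{n,\bar K}-T\delta_{\bar K}-x_{n,\bar K+1})/(TL\lambda_n)$ lies in $[-1,1]$. Using the $i=n$ bounds $\abs{x_{n,\bar K}},\abs{x_{n,\bar K+1}}\le\omega_n=\frac{1}{2}LT(\lambda_n-1)$ and $\abs{\delta_{\bar K}}\le L$, the triangle inequality yields $\abs{TL\lambda_n\,\xi}\le\frac{1}{2}LT(\lambda_n-1)+\frac{1}{2}LT(\lambda_n-1)+TL = TL\lambda_n$, so $\abs{\xi}\le 1$ as needed. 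Since~\eqref{eq:diff:errorsystem} has a unique solution for the given data, the constructed sliding-mode state is the actual $\x_{\bar K+1}$, which yields simultaneously $x_{1,\bar K+1}=\eta_{\bar K}$ and $\x_{\bar K+1}\in\Omega$.

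The main obstacle---really the single point where the specific constants matter---is recognizing that the geometric weights $2^{n-i+1}$ defining $\Omega$ are calibrated exactly so that the divided-difference recursion, which doubles the error bound and divides it by $T$ at each level, maps $\Omega$ into itself, and so that the same telescoping leaves precisely the budget $TL\lambda_n$ needed to absorb the disturbance $\delta_{\bar K}$ while keeping the sign selection feasible. Both the invariance step and the feasibility of $\xi$ hinge on the identical cancellation $LT(\lambda_n-1)+LT=LT\lambda_n$; once this is seen, everything else is the routine induction sketched above.
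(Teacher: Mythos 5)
Your proof is correct, but it reaches the key conclusion $x_{1,\bar K+1}=\eta_{\bar K}$ by a genuinely different route than the paper. The paper argues by contradiction and never constructs a solution: it multiplies the $i$th equation of \eqref{eq:diff:errorsystem} by $T^{i-1}$ and sums, obtaining a single scalar relation in which, if $x_{1,\bar K+1}-\eta_{\bar K}$ were nonzero, all signed terms on the left would align to give magnitude at least $LT^n\lambda_n - T^n\abs{\delta_{\bar K}} \ge LT^n(\lambda_n-1)$, while the right-hand side is bounded strictly below $LT^n(\lambda_n-1)$ by $N+\sum_i T^{i-1}\abs{x_{i,\bar K}}$ thanks to $\x_{\bar K}\in\Omega$; this excludes every non-sliding solution and is self-contained. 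You instead exhibit the sliding candidate, verify feasibility of the selection from $\spow{0}{0}$ (your budget computation $\abs{TL\lambda_n\xi}\le LT(\lambda_n-1)+LT=LT\lambda_n$ is exactly the same cancellation the paper exploits, read in the opposite direction), and then invoke uniqueness of solutions to identify the candidate with the actual successor state. That appeal is legitimate --- uniqueness is assumed for the class \eqref{eq:general:diff} and is verified for the IRED via the strictly monotone generalized equation \eqref{eq:alpha:inclusion}, an argument that carries over verbatim to the error system with the disturbance $\delta_k$ --- but it is an external ingredient your write-up should acknowledge explicitly, since without it "a sliding solution exists" does not yet imply "the solution slides." The second half of your argument, the induction giving $\abs{x_{i+1,\bar K+1}}\le(\abs{x_{i,\bar K+1}}+\abs{x_{i,\bar K}})/T\le 2^{i-n}LT^{n-i}(\lambda_n-1)$, coincides with the paper's.
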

The next lemma gives bounds on the states $x_{i,k}$ that eventually are established in discrete-time sliding-mode. 
\begin{lem}
    \label{lem:deadbeat}
    Let $n \in \NN$, $n \ge 2$ and $N, L, T, \lambda_1, \ldots, \lambda_n \in \RR_{> 0}$.
    Consider system \eqref{eq:diff:errorsystem} and suppose that $x_{1,k+1} = \eta_k \in [-N,N]$ holds for all $k \ge K$ and some $K \in \NN$.
    Then, $|x_{i,k}| \le (2/T)^{i-1} N$ holds for all $k \ge K + i$ and $i = 1, \ldots, n$.
\end{lem}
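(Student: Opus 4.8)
The plan is to exploit that the discrete-time sliding-mode hypothesis $x_{1,k+1} = \eta_k$ forces the argument $x_{1,k+1} - \eta_k$ of every signed-power and sign term in \eqref{eq:diff:errorsystem} to vanish for all $k \ge K$. Since the exponent $\frac{n-i}{n}$ is strictly positive for $i = 1, \ldots, n-1$, one has $\spowf{0}{n-i}{n} = 0$, so each of the first $n-1$ error equations \eqref{eq:diff:errorsystem:i} collapses for $k \ge K$ to the purely linear relation $x_{i,k+1} = x_{i,k} + T x_{i+1,k+1}$. Solving for the higher-index state yields the discrete divided-difference relation
\[
    x_{i+1,k+1} = \frac{x_{i,k+1} - x_{i,k}}{T} \qquad \text{for $i = 1, \ldots, n-1$ and $k \ge K$.}
\]
Because this expresses $x_{i+1}$ through $x_i$ for every $i$ up to $n-1$, the top state $x_n$ is recovered from the $(n-1)$-th equation and the set-valued last equation (with its bounded $\lambda_n$ and $\delta_k$ terms) is never needed for the bound.

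With this relation established, I would prove the claimed bounds by induction on $i$. The base case $i = 1$ is immediate: the hypothesis gives $|x_{1,k+1}| = |\eta_k| \le N$ for $k \ge K$, that is, $|x_{1,k}| \le N = (2/T)^{0} N$ for all $k \ge K+1$. For the inductive step, assume $|x_{i,k}| \le (2/T)^{i-1} N$ for all $k \ge K + i$. Then for every $k \ge K + i$ both $x_{i,k}$ and $x_{i,k+1}$ obey this bound, and the divided-difference relation together with the triangle inequality gives
\[
    |x_{i+1,k+1}| \le \frac{|x_{i,k+1}| + |x_{i,k}|}{T} \le \frac{2}{T}\left(\frac{2}{T}\right)^{i-1} N = \left(\frac{2}{T}\right)^{i} N,
\]
which after reindexing is precisely $|x_{i+1,k}| \le (2/T)^{i} N$ for all $k \ge K + i + 1$. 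This advances the induction and, after $n-1$ steps, establishes the statement for all $i = 1, \ldots, n$.

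The argument is elementary, so I do not expect a genuine obstacle; the only point demanding care is the bookkeeping of the time-index offsets. The factor $2$ underlying $(2/T)^{i-1}$ enters precisely because each divided difference combines two terms through the triangle inequality, while the shift to $k \ge K + i$ reflects that each successive state is resolved one sampling step later than its predecessor, so the induction consumes exactly one time step per level---matching the $+i$ offset in the conclusion.
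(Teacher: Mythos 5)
Your proof is correct and follows essentially the same route as the paper's: both reduce the error dynamics in sliding mode to the divided-difference relation $x_{i+1,k+1} = (x_{i,k+1}-x_{i,k})/T$ and then run the same induction on $i$ with the triangle inequality supplying the factor $2/T$ per level. The index bookkeeping you carry out matches the paper's as well.
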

The following lemma establishes tight bounds on the differentiation errors, once the error states $x_{i,k}$ vanish.
\begin{lem}
    \label{lem:errorbound}
    Let $m \in \NN$, $M \in \RR_{\ge 0}$, $T \in \RR_{> 0}$, and define the constants $c_{i,j}$ as in \eqref{eq:cij}.
    For $f \in \FLb{m}$, consider the divided differences $g^{f}_{i,k}$ as defined in \eqref{eq:gf}.
    Then,
    \begin{equation}
        \label{eq:lem:errorbound}
        |f^{(i)}(kT) - \sum_{j=i}^{m} T^{j-i} c_{i,j} g^{f}_{j+1,k+1}| \le c_{i,m+1} M T^{m+1-i}
    \end{equation}
    holds for all $k \in \NN_0$.
    Moreover, if $f(t) = \frac{M t^{m+1}}{(m+1)!}$, then the previous statement holds with equality in \eqref{eq:lem:errorbound} for all $k \ge m$.
\end{lem}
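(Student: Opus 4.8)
The plan is to recognize the sum in \eqref{eq:lem:errorbound} as the value at $t=kT$ of the $i$th derivative of the unique polynomial $p_k$ of degree at most $m$ that interpolates the extension $\bar f$ from \eqref{eq:fbar} at the $m+1$ equally spaced nodes $t_{k-m},\dots,t_k$, where $t_l = lT$. Once that is done, \eqref{eq:lem:errorbound} becomes a sharp bound on the error of numerical differentiation by interpolation, which I would attack via the Peano kernel. The first ingredient is a closed form for the coefficients: I claim $c_{i,j} = \frac{i!}{j!}\abs{s(j,i)}$, where $\abs{s(j,i)}$ denotes the unsigned Stirling numbers of the first kind. This is proved by checking that $\frac{i!}{j!}\abs{s(j,i)}$ obeys the recursion \eqref{eq:cij} and its initial values: inserting the Stirling recursion $\abs{s(j,i)} = (j-1)\abs{s(j-1,i)} + \abs{s(j-1,i-1)}$ and dividing by $j!$ reproduces \eqref{eq:cij} exactly, and the base cases match $c_{0,0}=1$, $c_{i,0}=c_{0,j}=0$.

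Next I would establish the interpolation reformulation. From \eqref{eq:gf} one has $g^f_{j+1,k+1} = T^{-j}\nabla^j\bar f(kT)$, where $\nabla$ is the backward difference acting on the sampled sequence $k\mapsto\bar f(kT)$; hence the $j$th Newton coefficient of $p_k$ on the nodes $t_k,t_{k-1},\dots,t_{k-j}$ is the divided difference $\bar f[t_k,\dots,t_{k-j}] = g^f_{j+1,k+1}/j!$. The corresponding Newton basis product is $\omega_j(t)=\prod_{l=0}^{j-1}(t-t_{k-l})$; writing $u=t-t_k$ gives $\omega_j = \prod_{l=0}^{j-1}(u+lT) = \sum_i \abs{s(j,i)}T^{j-i}u^i$ (the rising factorial generating the Stirling numbers), so that $\omega_j^{(i)}(t_k) = i!\,\abs{s(j,i)}\,T^{j-i}$, which vanishes for $j<i$. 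Combining these two facts with the closed form for $c_{i,j}$ yields $p_k^{(i)}(kT) = \sum_{j=i}^{m} T^{j-i} c_{i,j}\, g^f_{j+1,k+1}$, so the quantity to be bounded is exactly the interpolation-derivative error $E_{i,k} := \bar f^{(i)}(kT) - p_k^{(i)}(kT)$.

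The functional $\bar f\mapsto E_{i,k}$ is linear, annihilates all polynomials of degree at most $m$ (for which $p_k=\bar f$), and depends on $\bar f$ only through its values on $[t_{k-m},t_k]$. The Peano kernel theorem therefore gives $E_{i,k} = \int_{t_{k-m}}^{t_k} K_{i,k}(s)\,\bar f^{(m+1)}(s)\,\diffd s$, with $K_{i,k}$ obtained by applying the functional to $t\mapsto (t-s)_+^m/m!$. Since $\abs{\bar f^{(m+1)}}\le M$ almost everywhere and $\bar f^{(m+1)}=0$ for $t<0$ by \eqref{eq:fbar}, this yields $\abs{E_{i,k}}\le M\int\abs{K_{i,k}}$ for every $k\in\NN_0$, and because the node pattern is translation invariant, $K_{i,k}$ is a shift of $K_{i,0}$, so its sign pattern and its total integral do not depend on $k$. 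To evaluate $\int K_{i,k}$ I would apply the functional to $\phi(t)=\tfrac{M t^{m+1}}{(m+1)!}$ for an index $k\ge m$, where all nodes are nonnegative, $\bar f=\phi$ on $[t_{k-m},t_k]$, and $\bar f^{(m+1)}\equiv M$: the Newton remainder gives $\phi-p_k = \tfrac{M}{(m+1)!}\prod_{l=0}^m(t-t_{k-l})$, whence $E_{i,k} = \tfrac{M}{(m+1)!}\,i!\,\abs{s(m+1,i)}\,T^{m+1-i} = c_{i,m+1}M T^{m+1-i}>0$.

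The crux is the sign-definiteness of $K_{i,k}$: this is exactly equivalent to the assertion that the worst case over $\abs{f^{(m+1)}}\le M$ is attained by $f^{(m+1)}\equiv M$, i.e.\ to the claimed equality case, since then $\int\abs{K_{i,k}} = \int K_{i,k} = c_{i,m+1}T^{m+1-i}$ and the bound \eqref{eq:lem:errorbound} follows for all $k\in\NN_0$ (for $k<m$ the integrand is supported where $\bar f^{(m+1)}=0$, only tightening the estimate), with equality for $k\ge m$ from the monomial computation. I expect the sign argument to be the main obstacle. A single explicit computation confirms it in the simplest case: for $m=i=1$ the integral remainder gives $K_{1,k}(s)=(s-t_{k-1})/T\ge 0$ with $\int K_{1,k}=T/2=c_{1,2}T$. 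In general I would secure the constant sign either by invoking the classical nonnegativity of the B-spline-type Peano kernels attached to polynomial interpolation and its derivatives, or, to keep the proof self-contained, by an induction on $m$ that propagates the sign through the divided-difference recursion \eqref{eq:gf}; the coefficient bookkeeping is routine once the sign is controlled, and the delicate point is verifying that no sign change is introduced in passing from order $m$ to order $m+1$.
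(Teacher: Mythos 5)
Your overall strategy coincides with the paper's: both proofs recognize $\sum_{j=i}^m T^{j-i}c_{i,j}g^f_{j+1,k+1}$ as $p_k^{(i)}(kT)$ for the Newton interpolant of $\bar f$ at the nodes $(k-m)T,\dots,kT$, both verify the coefficient identity through the recursion \eqref{eq:cij} (your $c_{i,j}=\tfrac{i!}{j!}\abs{s(j,i)}$ is the same statement as the paper's $c_{i,j}=w_j^{(i)}(0)$ for the Newton polynomials $w_j$), and both obtain the equality case for $f(t)=\tfrac{Mt^{m+1}}{(m+1)!}$, $k\ge m$, by splitting off the degree-$(m+1)$ Newton remainder $\tfrac{M}{(m+1)!}\prod_{l=0}^m(t-t_{k-l})$, whose divided differences over the nodes vanish. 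Those parts of your argument are correct, including the observation that for $k<m$ the extension \eqref{eq:fbar} only makes the bound easier.

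The genuine gap is the step you yourself flag as "the main obstacle": the sign-definiteness of the Peano kernel $K_{i,k}$ of the functional $f\mapsto f^{(i)}(kT)-p_k^{(i)}(kT)$. Without it you only get $\abs{E_{i,k}}\le M\int\abs{K_{i,k}}$, and the identification $\int\abs{K_{i,k}}=\int K_{i,k}=c_{i,m+1}T^{m+1-i}$ — which is the entire content of inequality \eqref{eq:lem:errorbound} — is exactly what remains unproven. The "classical nonnegativity of B-spline-type Peano kernels" you hope to invoke covers the kernel of a single divided difference (a B-spline), not the kernel of the \emph{derivative} of the interpolation error at a node; definiteness of the latter functional is a genuinely nontrivial property, and the sketched induction on $m$ is not carried out. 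The paper closes precisely this step by citing an external result (Shadrin's Theorem~A on derivative bounds for Lagrange interpolation), which yields $|f^{(i)}(kT)-p_k^{(i)}(kT)|\le \tfrac{M}{(m+1)!}\,\bigl|\omega_{m+1}^{(i)}(kT)\bigr| = c_{i,m+1}MT^{m+1-i}$ directly, with no definiteness argument required. So either import such a theorem explicitly, or actually prove the one-signedness of $K_{i,k}$ at the endpoint node for equispaced points; as it stands, the proposal establishes the reformulation and the extremal value but not the inequality itself.
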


The following proposition now shows how the initial values of the signal $f$ and of the error system \eqref{eq:diff:errorsystem} are related.
\begin{prop}
    \label{prop:initial}
    Let $m \in \NN$, $L,T \in \RR_{> 0}$ and $f \in \FL{m}$, and consider divided differences $g^{f}_{i,k}$ as defined in \eqref{eq:gf} and the constants $c_{i,j}$ as defined in \eqref{eq:cij}.
    Then, $g_{1,1}, \ldots, g_{n,1}$ satisfy
    \begin{equation}
        \label{eq:initial}
        \begin{bmatrix}
T g^f_{2,1} \\
            T^2 g^f_{3,1} \\
            \vdots \\
            T^{n-1} g^f_{n,1}
        \end{bmatrix} = \begin{bmatrix}
c_{1,1} & c_{1,2} & \ldots & c_{1,n-1} \\
            0 & c_{2,2} & \ldots & c_{2,n-1} \\
            \vdots & \ddots & \ddots & \vdots \\
            0 & 0 & \ldots & c_{n-1,n-1}
        \end{bmatrix}^{-1} \begin{bmatrix}
T f^{(1)}(0) \\
            T^2 f^{(2)}(0) \\
            \vdots \\
            T^{n-1} f^{(n-1)}(0)
        \end{bmatrix}
    \end{equation}
and $g_{1,1} = f(0)$.
\end{prop}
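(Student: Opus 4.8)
The plan is to read both sides through the backward-difference operator and to recognize the constants $c_{i,j}$ as exactly the coefficients that convert backward differences into scaled derivatives. Write $\nabla$ for the backward difference with step $T$, i.e. $\nabla h(t) = h(t) - h(t-T)$, and set $D_T = T\,\frac{\diffd}{\diffd t}$. The claim $g_{1,1} = f(0)$ is immediate from \eqref{eq:gf} and \eqref{eq:fbar}, since $g^f_{1,1} = \bar f(0) = f(0)$. The substance is the vector identity, and my first step would be to unwind \eqref{eq:gf} by induction on $i$ to get the closed form $T^{i-1} g^f_{i,1} = \nabla^{i-1}\bar f(0)$; equivalently $T^j g^f_{j+1,1} = \nabla^j \bar f(0)$ for $j = 0,\ldots,n-1$.

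Next I would pass from $\bar f$ to a polynomial. Because $\bar f$ coincides on $(-\infty,0]$ with the Taylor polynomial $P(t) = f(0) + \sum_{j=1}^{m}\frac{t^j}{j!}f^{(j)}(0)$ of degree $m = n-1$, and because each backward difference at $0$ uses only the values $\bar f(0), \bar f(-T), \bar f(-2T), \ldots$, I may replace $\bar f$ by $P$ everywhere, obtaining $T^j g^f_{j+1,1} = \nabla^j P(0)$, where moreover $P^{(i)}(0) = f^{(i)}(0)$ for $0 \le i \le m$.

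The core step is an operator identity on the finite-dimensional space of polynomials of degree at most $m$, on which both $\nabla$ and $D_T$ are nilpotent. Taylor's theorem gives the exact, terminating relation $\nabla = I - e^{-D_T}$; inverting the corresponding formal power series (legitimate since all series terminate) yields $D_T = -\ln(I-\nabla) = \sum_{k\ge 1}\nabla^k/k$, hence $D_T^{\,i} = \sum_{j\ge i}\tilde c_{i,j}\,\nabla^j$ for suitable coefficients $\tilde c_{i,j}$. I would then identify $\tilde c_{i,j} = c_{i,j}$ through the generating functions $F_i(x) = \sum_j \tilde c_{i,j}x^j = (-\ln(1-x))^i$: these satisfy $(1-x)F_i'(x) = i F_{i-1}(x)$ with $F_0 = 1$, $F_i(0)=0$, and reading this off coefficient-wise reproduces precisely $j c_{i,j} = (j-1)c_{i,j-1} + i c_{i-1,j-1}$ together with $c_{0,0}=1$, $c_{i,i}=1$, and $c_{i,j}=0$ for $j<i$, i.e. \eqref{eq:cij}. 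Applying $D_T^{\,i}$ to $P$ at $t = 0$ then gives $T^i f^{(i)}(0) = D_T^{\,i}P(0) = \sum_{j=i}^{n-1} c_{i,j}\,\nabla^j P(0) = \sum_{j=i}^{n-1} c_{i,j}\, T^j g^f_{j+1,1}$, the sum truncating at $n-1$ because $\nabla^j P = 0$ for $j > m$. This is exactly the $i$th row of the linear system $C\,\v = \w$ with $v_j = T^j g^f_{j+1,1}$, $w_i = T^i f^{(i)}(0)$, for $i = 1,\ldots,n-1$.

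Finally, since $C = (c_{i,j})_{i,j=1}^{n-1}$ is upper triangular with unit diagonal, it is invertible, and $C\,\v = \w$ rearranges into $\v = C^{-1}\w$, which is precisely \eqref{eq:initial}. I expect the core step to be the main obstacle: rigorously justifying the operator identity $D_T^{\,i} = \sum_j c_{i,j}\,\nabla^j$, namely that inverting $\nabla = I - e^{-D_T}$ into a power series in $\nabla$ is valid on polynomials (nilpotency forcing all sums to terminate) and that the resulting coefficients genuinely coincide with the recursively defined $c_{i,j}$. Everything else—the inductive closed form for $g^f_{i,1}$, the replacement of $\bar f$ by its Taylor polynomial, and the triangular inversion—is routine bookkeeping.
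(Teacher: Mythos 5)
Your proof is correct, and its overall skeleton matches the paper's: establish $g^f_{1,1}=f(0)$ directly, replace $\bar f$ by its degree-$m$ Taylor polynomial (legitimate because the divided differences at $k=0$ only see $\bar f$ on $(-\infty,0]$, where the two agree), prove the triangular identity $T^i f^{(i)}(0) = \sum_{j=i}^{m} c_{i,j}\, T^j g^f_{j+1,1}$ for polynomials, and invert the unit-upper-triangular matrix. Where you genuinely diverge is in how that key identity is obtained. The paper simply invokes its Lemma~\ref{lem:errorbound} with $M=k=0$, whose own proof identifies $c_{i,j}=w_j^{(i)}(0)$ for the Newton polynomials $w_j(x)=\frac{1}{j!}x(x+1)\cdots(x+j-1)$ and uses exactness of Newton interpolation on polynomials of degree at most $m$. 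You instead prove the identity from scratch via the operator identity $D_T=-\ln(I-\nabla)$ on the nilpotent algebra of polynomials of degree at most $m$, identifying the $c_{i,j}$ as the coefficients of $(-\ln(1-x))^i$ through the ODE $(1-x)F_i'=iF_{i-1}$, which reproduces recursion \eqref{eq:cij} exactly. The two characterizations are equivalent --- the generating function of $j\mapsto w_j^{(i)}(0)$ is precisely $(-\ln(1-x))^i$ --- so you have in effect re-proved the $M=0$ case of Lemma~\ref{lem:errorbound} by a different, self-contained route. What your approach buys is independence from the interpolation-error machinery (the Shadrin bound is irrelevant when $M=0$) and a cleaner combinatorial explanation of where \eqref{eq:cij} comes from; what the paper's approach buys is brevity, since the lemma is needed elsewhere anyway and its interpolation-theoretic proof also delivers the tight error bound for $M>0$ that your operator argument does not address (nor needs to, for this proposition).
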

\begin{proof}
    The fact that $g_{1,1} = f(0)$ is clear from the definition \eqref{eq:gf}.
    Consider the function $h \in \FLz{m}$ defined as
    \begin{equation}
        h(t) = f(0) + \sum_{j=1}^{m} \frac{t^{j}}{j!} f^{(j)}(0),
    \end{equation}
    which coincides with $\bar f(t)$ defined in \eqref{eq:fbar} for $t \le 0$.
    Hence, $g^{f}_{i,1} = g^{h}_{i,1}$ holds for all $i$, because evaluation of those divided differences only involves values of $\bar f(t)$ for $t \le 0$.
    Applying Lemma~\ref{lem:errorbound} to $h \in \FLz{m}$ with $M = k = 0$ yields
    \begin{equation}
        T^{i} f^{(i)}(0) = T^{i} h^{(i)}(0) = T^{i} \sum_{j=i}^{m} T^{j-i} c_{i,j} g^{h}_{j+1,1} = \sum_{j=i}^{m} c_{i,j} T^{j} g^{f}_{j+1,1}
    \end{equation}
    from which \eqref{eq:initial} is obtained after solving for $T^j g^f_{j+1,1}$.
\end{proof}

The next proposition shows differentiation error bounds, provided that the error states $x_{i,k}$ are ultimately bounded.
\begin{prop}
    \label{prop:errorbound}
    Let $m \in \NN$, $n = m+1$ and $L \in \RR_{> 0}$, $N \in \RR_{\ge 0}$, $M \in [0, L]$.
    Consider the sample-based differentiator $\Diff{m}$ with sampling time $T \in \RR_{> 0}$ and parameters $\lambda_1, \ldots, \lambda_n \in \RR_{> 0}$.
    For given $f \in \FLb{m}$ and $(\eta_k) \in \EN$, consider the error system \eqref{eq:diff:errorsystem} and suppose that there exists a finite integer $K \in \NN$ and constants $\psi_0, \ldots, \psi_{m} \in \RR_{> 0}$ such that
    \begin{equation}
        \label{eq:xbound:assumption}
        |x_{i+1,k}| \le \psi_{i} L^{\frac{i}{n}} N^{\frac{n-i}{n}}
    \end{equation}
    holds for $i = 0, \ldots, m$ and all $k \ge K$.
    Then, the output $(\y_k) = \Diff{m}[ (u_k) ]$ of the differentiator with input $u_k = f(kT) + \eta_k$ satisfies
    \begin{equation}
        \label{eq:differror:psiLbar}
        |y_{i,k} - f^{(i)}(kT)| \le c_{i,n} M T^{n-i} + \sum_{j=i}^{m} c_{i,j} \psi_{j} L^{\frac{j}{n}} N^{\frac{n-j}{n}} T^{j-i}
    \end{equation}
for $i = 1, \ldots, m$ and all $k \ge K$.
    Moreover, if $N = 0$, $K \ge m$, and $f(t) = \frac{M t^{n}}{n!}$, then the previous statement holds with equality in \eqref{eq:differror:psiLbar}.
\end{prop}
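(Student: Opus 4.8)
The plan is to express the differentiation error $y_{i,k} - f^{(i)}(kT)$ as a sum of two contributions: a structural error coming from the divided-difference approximation of the derivatives, and a perturbation coming from the error states $x_{i,k}$. Recall that the outputs are $y_{i,k} = \sum_{j=i}^{m} T^{j-i} c_{i,j} z_{j+1,k+1}$ by \eqref{eq:diff:y}, and that the error variables satisfy $z_{j+1,k+1} = g^{f}_{j+1,k+1} + x_{j+1,k+1}$ by definition. Substituting this split into the output map, I would write
\begin{equation}
    y_{i,k} = \sum_{j=i}^{m} T^{j-i} c_{i,j} g^{f}_{j+1,k+1} + \sum_{j=i}^{m} T^{j-i} c_{i,j} x_{j+1,k+1}.
\end{equation}
The first sum is precisely the quantity estimated in Lemma~\ref{lem:errorbound}, so subtracting $f^{(i)}(kT)$ and applying the triangle inequality immediately bounds the first part by $c_{i,m+1} M T^{m+1-i} = c_{i,n} M T^{n-i}$.

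For the second (perturbation) sum, I would apply the triangle inequality termwise and invoke the hypothesis \eqref{eq:xbound:assumption}. Writing the generic term with index $j$, the factor $x_{j+1,k+1}$ is bounded by $\psi_{j} L^{j/n} N^{(n-j)/n}$ (taking $i = j$ in \eqref{eq:xbound:assumption}, valid since $k+1 \ge K$ whenever $k \ge K$), so the term is at most $c_{i,j} \psi_{j} L^{j/n} N^{(n-j)/n} T^{j-i}$. Summing over $j = i, \ldots, m$ gives exactly the second sum in \eqref{eq:differror:psiLbar}. Combining the two bounds yields the claimed inequality for all $k \ge K$ and $i = 1, \ldots, m$. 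A small point to verify is that the indexing is consistent: the state-bound hypothesis is stated for $x_{i+1,k}$ with $i = 0, \ldots, m$, which covers $x_{2,\cdot}, \ldots, x_{n,\cdot}$, precisely the states $z_{j+1,\cdot}$ appearing in the output map for $j = 1, \ldots, m$.

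The equality claim is handled by the corresponding equality case of Lemma~\ref{lem:errorbound}. When $N = 0$, the hypothesis \eqref{eq:xbound:assumption} forces $x_{i+1,k} = 0$ for all $k \ge K$, so the perturbation sum vanishes identically and $y_{i,k} = \sum_{j=i}^{m} T^{j-i} c_{i,j} g^{f}_{j+1,k+1}$ exactly. Then, for $f(t) = \frac{M t^{n}}{n!}$ and $k \ge m$ (hence $k \ge K$ with $K \ge m$), the equality assertion of Lemma~\ref{lem:errorbound} gives $f^{(i)}(kT) - y_{i,k} = \pm c_{i,n} M T^{n-i}$ with the correct sign, establishing equality in \eqref{eq:differror:psiLbar}.

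I do not anticipate a serious obstacle here: the proposition is essentially a bookkeeping consequence of Lemma~\ref{lem:errorbound} together with the linearity of the output map in the state variables. The only part requiring mild care is the alignment of the bound indices (ensuring that $x_{j+1,k+1}$ is controlled for every $j$ in the output sum and at the correct time index $k+1$), and confirming that the equality case of the lemma applies at the right range of $k$; both are routine once the state/divided-difference split is made explicit.
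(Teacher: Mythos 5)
Your proposal is correct and follows essentially the same route as the paper's proof: split $z_{j+1,k+1} = g^{f}_{j+1,k+1} + x_{j+1,k+1}$ in the output map, bound the divided-difference part via Lemma~\ref{lem:errorbound} and the state part via \eqref{eq:xbound:assumption} using the triangle inequality, and invoke the equality case of the lemma when $N=0$. No gaps.
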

\begin{proof}
    From Lemma~\ref{lem:errorbound}, relation \eqref{eq:diff:y}, and $z_{i,k} = x_{i,k} + g^{f}_{i,k}$, obtain
    \begin{align}
        | f^{(i)}(kT) - y_{i,k} | &= \bigl|f^{(i)}(kT) - \sum_{j = i}^{m} T^{j-i} c_{i,j} (x_{j+1,k+1} + g^{f}_{j+1,k+1})\bigr| \le |f^{(i)}(kT) - \sum_{j = i}^{m} T^{j-i} c_{i,j} g^{f}_{j+1,k+1}| + \sum_{j=i}^{m} T^{j-i} c_{i,j} |x_{j+1,k+1}| \nonumber \\
        &\le c_{i,n} M T^{n-i} + \sum_{j=i}^{m} c_{i,j} \psi_{j} L^{\frac{j}{n}} N^{\frac{n-j}{n}} T^{j-i},
    \end{align}
    proving \eqref{eq:differror:psiLbar}.
    For $N = 0$, equality for $f(t) = \frac{M t^n}{n!}$ is also obtained from that lemma.
\end{proof}

\subsection{Transformed Error System}

In order to construct a Lyapunov function for the error system, a state transformation similar to the one proposed by Cruz-Zavala and Moreno\cite{cruz2019levant} is first applied.
To that end, define the parameters
$
    \kappa_j = \lambda_{n-j+1}/\lambda_{n-j}
$
for $j = 0, \ldots, n$ with the abbreviations $\lambda_0 = \lambda_{n+1} = 1$, i.e.,
\begin{equation}
    \kappa_0 = \frac{1}{\lambda_n}, \quad
    \kappa_1 = \frac{\lambda_n}{\lambda_{n-1}}, \quad
    \ldots, \quad
    \kappa_{n-1} = \frac{\lambda_2}{\lambda_{1}}, \quad
    \kappa_{n} = \frac{\lambda_1}{1},
\end{equation}
and introduce the state transform
$
    \xi_j = x_{n-j+1}/(\lambda_{n-j} L)
$
for $j = 1, \ldots, n$, i.e.,
\begin{equation}
    \xi_1 = \frac{x_n}{\lambda_{n-1} L}, \quad
	\xi_2 = \frac{x_{n-1}}{\lambda_{n-2} L}, \quad
    \ldots, \quad
    \xi_{n-1} = \frac{x_2}{\lambda_1 L}, \quad
    \xi_n = \frac{x_1}{L},
\end{equation}
which reverses and scales the state variables of \eqref{eq:diff:errorsystem}, yielding the system
\begin{equation}
    \label{eq:diff:errorsystem:transformed}
    \begin{aligned}
        \xi_{1,k+1} &\in \xi_{1,k} - T \kappa_1( \spow{\xi_{n,k+1} - \tilde\eta_k}{0} + \kappa_0 \tilde\delta_k ) \\ 
        \xi_{2,k+1} &= \xi_{2,k} - T \kappa_2( \spowf{\xi_{n,k+1} - \tilde\eta_k}{1}{n} - \xi_{1,k+1} ) \\ 
&\vdots \\
        \xi_{n-1,k+1} &= \xi_{n-1,k} - T \kappa_{n-1}( \spowf{\xi_{n,k+1} - \tilde\eta_k}{n-2}{n} - \xi_{n-2,k+1} ) \\
        \xi_{n,k+1} &= \xi_{n,k} - T \kappa_n( \spowf{\xi_{n,k+1} - \tilde\eta_k}{n-1}{n} - \xi_{n-1,k+1} )
    \end{aligned}
\end{equation}
with $\tilde\eta_k = \frac{\eta_k}{L}$ and $\tilde \delta_k = \frac{\delta_k}{L}$.
Using the convenient abbreviations $\eta_{n+1,k+1} = -\tilde\eta_k$ and $\xi_{0,k+1} = -\kappa_0 \tilde\delta_k$, this system may be rewritten as
\begin{equation}
    \label{eq:diff:errorsystem:transformed2}
    \begin{aligned}
        \xi_{1,k+1} &= \xi_{1,k} - T \kappa_1 \eta_{1,k+1} \qquad & \eta_{1,k+1} &\in \spow{\xi_{1,k+1} + \eta_{2,k+1}}{0} -  \xi_{0,k+1} \\ 
        \xi_{2,k+1} &= \xi_{2,k} - T \kappa_2 \eta_{2,k+1} & \eta_{2,k+1} &=  \spowf{\xi_{2,k+1} + \eta_{3,k+1}}{1}{2} - \xi_{1,k+1} \\ 
&\vdots \\
\xi_{n,k+1} &= \xi_{n,k} - T \kappa_n \eta_{n,k+1} & \eta_{n,k+1} &= \spowf{\xi_{n,k+1} + \eta_{n+1,k+1}}{n-1}{n} - \xi_{n-1,k+1}.
    \end{aligned}
\end{equation}
This recursive form of the system admits the recursive construction of a Lyapunov function\cite{seeber2023closed}, which is studied in the following.

\subsection{Lyapunov Function}

In the following, a discrete-time Lyapunov function for the error system \eqref{eq:diff:errorsystem} is proposed.
The same Lyapunov function has already been used to analyze the continuous-time robust exact differentiator in a conference paper\cite{seeber2023closed} subject to some additional technical assumptions.
However, it does not have convex sublevel sets, and hence existing approaches\cite{efimov2017realization,brogliato2020implicit} for transferring this analysis to the proposed implicitly discretized differentiator are not applicable.
Hence, the following analysis is performed purely in discrete time without relying on continuous-time results.

Introduce the state vector $\vxi = [ \xi_1 \quad \ldots \quad \xi_n ]\TT$, recursively define the positive semidefinite functions $V_j : \RR^n \to \RR_{\ge 0}$ as
\begin{subequations}
    \label{eq:V}
    \begin{align}
        V_1(\vxi) &= \abs{\xi_1}, \\
        V_{j}(\vxi) &= \max\left\{ V_{j-1}(\vxi), \alpha_{j}^{-\frac{1}{j-1}} \abs{\spowf{\xi_{j}}{j-1}{j} - \xi_{j-1}}^{\frac{1}{j-1}}  \right\} \qquad \text{for $j=2, \ldots, n$}
    \end{align}
\end{subequations}
with positive parameters $\alpha_2, \ldots, \alpha_{n}$.
Note that $V_1(\vxi) \le V_2(\vxi) \le \ldots \le V_n(\vxi)$ holds by construction and consider $V = V_n$ as a Lyapunov function candidate.
The following lemma introduces conditions on the free parameters and shows bounds on the state variables in terms of these functions $V_1, \ldots, V_n$.

\begin{lem}
    \label{lem:Vbound}
    Let $n \in \NN$ and suppose that $\alpha_2, \ldots, \alpha_{n}$, $\beta_1, \ldots, \beta_n$, $\gamma_1, \ldots, \gamma_n \in \RR_{> 0}$ satisfy $\beta_1 = 1$, $\gamma_1 = 2$, and
    \begin{align}
        \label{eq:recursion}
        \alpha_{j+1}\gamma_j^j &\in ( 1, 2 ), &
\beta_{j+1} &= (\beta_{j}^{j} + \alpha_{j+1})^{\frac{1}{j}}, &
\gamma_{j+1} &= \left(\frac{2 \gamma_{j}^{j}}{2 - \alpha_{j+1} \gamma_{j}^{j}}\right)^{\frac{1}{j}},
    \end{align}
    for $j = 1, \ldots, n-1$.
    Consider functions $V_1, \ldots, V_n$ defined in \eqref{eq:V}.
    Then, the inequality
$
\abs{\xi_j} \le \beta_j^j V_j(\vxi)^{j}
$
holds for all $\vxi \in \RR^n$ and all $j = 1, \ldots, n$; in particular, $V_n$ is positive definite and radially unbounded.
\end{lem}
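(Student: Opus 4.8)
The plan is to prove the inequality $\abs{\xi_j} \le \beta_j^j V_j(\vxi)^j$ by induction on $j$, and then to read off positive definiteness and radial unboundedness of $V_n$ as immediate corollaries. For the base case $j = 1$, the definition $V_1(\vxi) = \abs{\xi_1}$ together with $\beta_1 = 1$ gives equality, so the claim holds at once.

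For the inductive step I would fix $j \ge 2$ and assume the bound for $j-1$. The key observation is that the defining formula \eqref{eq:V} for $V_j$ directly yields the two facts $V_{j-1}(\vxi) \le V_j(\vxi)$ and, after rearranging the second argument of the maximum, $\abs{\spowf{\xi_j}{j-1}{j} - \xi_{j-1}} \le \alpha_j V_j(\vxi)^{j-1}$ (here $\alpha_j > 0$ guarantees $V_j$ is well-defined). Applying the triangle inequality to the latter, and using $\abs{\spowf{\xi_j}{j-1}{j}} = \abs{\xi_j}^{\frac{j-1}{j}}$, gives $\abs{\xi_j}^{\frac{j-1}{j}} \le \abs{\xi_{j-1}} + \alpha_j V_j(\vxi)^{j-1}$. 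The inductive hypothesis bounds $\abs{\xi_{j-1}} \le \beta_{j-1}^{j-1} V_{j-1}(\vxi)^{j-1}$, and the monotonicity $V_{j-1} \le V_j$ upgrades the right-hand side to $\beta_{j-1}^{j-1} V_j(\vxi)^{j-1}$. Substituting and invoking the recursion $\beta_j^{j-1} = \beta_{j-1}^{j-1} + \alpha_j$ (which is exactly the $\beta$-relation in \eqref{eq:recursion} with the index shifted by one) collapses the bound to $\abs{\xi_j}^{\frac{j-1}{j}} \le \beta_j^{j-1} V_j(\vxi)^{j-1}$. Raising both sides to the power $\frac{j}{j-1}$, which is legitimate since all quantities involved are nonnegative, yields $\abs{\xi_j} \le \beta_j^j V_j(\vxi)^j$ and closes the induction.

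Positive definiteness and radial unboundedness of $V_n$ then follow directly from the established chain $\abs{\xi_j} \le \beta_j^j V_j(\vxi)^j \le \beta_j^j V_n(\vxi)^j$, valid for each $j$ because $V_1 \le \cdots \le V_n$ by construction. If $V_n(\vxi) = 0$, this forces $\xi_j = 0$ for every $j$, hence $\vxi = 0$; together with $V_n \ge 0$ this gives positive definiteness. For radial unboundedness I would rewrite the bound as $V_n(\vxi) \ge \bigl(\abs{\xi_j}/\beta_j^j\bigr)^{1/j}$ for each $j$, so that $V_n(\vxi) \to \infty$ whenever $\norm{\vxi} \to \infty$.

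I expect the only delicate point to be the bookkeeping with the fractional exponents and the index shift in the $\beta$-recursion. It is worth noting that the parameters $\gamma_j$ and the constraint $\alpha_{j+1}\gamma_j^j \in (1,2)$ play no role in this particular inequality—they merely guarantee that $\gamma_{j+1}$ is well-defined and will be needed later in the Lyapunov decrease estimate—so I would not invoke them here beyond the positivity $\alpha_j > 0$ needed for $V_j$ to make sense.
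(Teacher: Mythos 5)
Your proof is correct and follows essentially the same route as the paper's: induction on $j$, with the triangle inequality applied to $\spowf{\xi_j}{j-1}{j}$, the bound $\abs{\spowf{\xi_j}{j-1}{j}-\xi_{j-1}}\le\alpha_j V_j(\vxi)^{j-1}$ read off from the max in \eqref{eq:V}, the monotonicity $V_{j-1}\le V_j$, and the $\beta$-recursion to collapse the sum; the only cosmetic difference is that the paper raises to the power $\tfrac{j}{j-1}$ before substituting the bounds rather than after. Your observation that the $\gamma_j$ and the constraint $\alpha_{j+1}\gamma_j^j\in(1,2)$ are not needed here is also accurate.
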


The next lemma, which is proven by induction over the system order $n$, establishes that the recursively constructed, positive definite function $V = V_n$ is indeed a Lyapunov function for system \eqref{eq:diff:errorsystem:transformed2}.
\begin{lem}
    \label{lem:main}
Let $n \in \NN$ and let $\alpha_2, \ldots, \alpha_{n}$, $\beta_1, \ldots, \beta_n$, $\gamma_1, \ldots, \gamma_n \in \RR_{> 0}$ satisfy the conditions of Lemma~\ref{lem:Vbound}.
    Suppose that $\kappa_0, \kappa_1, \ldots, \kappa_n \in \RR_{> 0}$ satisfy $\kappa_0 \in (0,1)$, $\kappa_1 > 0$, and
    \begin{equation}
\label{eq:cond:aux}
\frac{\kappa_p}{p}\frac{\alpha_{p} \gamma_p^{p-1} - 2}{\beta_p} > \frac{\kappa_{p-1}}{p-1} \biggl( \alpha_{p-1} \gamma_p^{p-1} + 2^{\frac{1}{p-1}} \gamma_p\left( \alpha_{p} \gamma_p^{p-1} + 2 \right)^{\frac{p-2}{p-1}}\biggr)
    \end{equation}
    for $p = 2, \ldots, n$ with $\alpha_1 = 0$.
    Consider system \eqref{eq:diff:errorsystem:transformed2} and the functions $V_1, \ldots, V_n$ defined in \eqref{eq:V}.
    Then, for all $N \in \RR_{> 0}$ there exist positive constants $\epsilon_1, \ldots, \epsilon_n \in \RR_{> 0}$ such that for all $k \ge 1$ and all $j =1, \ldots, n$ the three inequalities $V_j(\vxi_{k+1}) >  \gamma_j \sqrt[j]{ N/(2L) }$, $|\eta_{j+1,k+1}|\le \frac{N}{L}$, and $|\xi_{0,k+1}| \le \kappa_0$ imply 
$
        V_j(\vxi_{k+1}) \le V_j(\vxi_{k}) - \epsilon_j.
$
\end{lem}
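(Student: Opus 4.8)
The plan is to argue by induction on the system order $n$, exploiting that the claim for a fixed index $j$ is precisely the top-level claim for the order-$j$ subsystem formed by the first $j$ equations of \eqref{eq:diff:errorsystem:transformed2}, with $\eta_{j+1,k+1}$ in the role of the measurement-noise input. Indeed, the three conditions attached to index $j$ — the threshold $V_j(\vxi_{k+1}) > \gamma_j \sqrt[j]{N/(2L)}$, the input bound $|\eta_{j+1,k+1}| \le N/L$, and $|\xi_{0,k+1}| \le \kappa_0$ — are exactly those the lemma imposes on that subsystem, and the defining recursions for $\alpha_j,\beta_j,\gamma_j$ depend only on indices up to $j$. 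Hence the statements for $j = 1, \ldots, n-1$ follow verbatim from the induction hypothesis, and the only genuinely new assertion at order $n$ is the strict per-step decrease of the top function $V = V_n$.

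For the base case $j = n = 1$ I would use $V_1(\vxi) = |\xi_1|$ with $\gamma_1 = 2$, so that the threshold reads $|\xi_{1,k+1}| > N/L$. Since $|\eta_{2,k+1}| \le N/L < |\xi_{1,k+1}|$, the argument $\xi_{1,k+1} + \eta_{2,k+1}$ of the set-valued sign in \eqref{eq:diff:errorsystem:transformed2} is sign-definite, whence $\eta_{1,k+1} = \sign(\xi_{1,k+1}) - \xi_{0,k+1}$ is single-valued with the sign of $\xi_{1,k+1}$ and magnitude at least $1 - \kappa_0 > 0$ (using $|\xi_{0,k+1}| \le \kappa_0 < 1$). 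Substituting into $\xi_{1,k+1} = \xi_{1,k} - T\kappa_1 \eta_{1,k+1}$ gives $|\xi_{1,k}| = |\xi_{1,k+1}| + T\kappa_1 |\eta_{1,k+1}|$, so $V_1$ drops by at least $\epsilon_1 = T\kappa_1(1-\kappa_0)$. This exhibits the precise mechanism by which the \emph{implicit} discretization forces a strict decrease once one is above the noise floor.

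For the inductive step I would write $V_n = \max\{V_{n-1}, W_n\}$, abbreviating by $W_n(\vxi) = \alpha_n^{-1/(n-1)} |\spowf{\xi_n}{n-1}{n} - \xi_{n-1}|^{1/(n-1)}$ the second entry of the maximum in \eqref{eq:V}, and distinguish which term attains $V_n(\vxi_{k+1})$. If $V_{n-1}$ attains it, I would invoke the order-$(n-1)$ induction hypothesis: its threshold is met because \eqref{eq:recursion} yields $\gamma_{j+1}^{j} = 2\gamma_j^{j}/(2 - \alpha_{j+1}\gamma_j^{j}) > 2\gamma_j^{j}$, which makes the nested thresholds $\gamma_j\sqrt[j]{N/(2L)}$ compatible, while the effective input level acting on $\eta_{n,k+1}$ is controlled by $W_n(\vxi_{k+1}) \le V_{n-1}(\vxi_{k+1})$ together with the weighted homogeneity of \eqref{eq:diff:errorsystem:transformed2} (under $\xi_j \mapsto s^{j}\xi_j$, $\eta \mapsto s\,\eta$, $V_j \mapsto sV_j$); this keeps the corresponding subsystem threshold strictly below $V_{n-1}(\vxi_{k+1})$, so that the inherited decrease of $V_{n-1}$ dominates $V_n(\vxi_k)$. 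If instead $W_n$ attains the maximum, then $W_n(\vxi_{k+1}) > \gamma_n\sqrt[n]{N/(2L)}$ is large, and I would show $W_n$ itself decreases: writing $\sigma_{n,k+1} = \spowf{\xi_{n,k+1}}{n-1}{n} - \xi_{n-1,k+1}$ and substituting $\xi_{n,k} = \xi_{n,k+1} + T\kappa_n\eta_{n,k+1}$, $\xi_{n-1,k} = \xi_{n-1,k+1} + T\kappa_{n-1}\eta_{n-1,k+1}$, I would bound $|\sigma_{n,k}| - |\sigma_{n,k+1}|$ from below using the concavity of $y \mapsto \spowf{y}{n-1}{n}$ and the state bounds $|\xi_j| \le \beta_j^{j} V_j(\vxi)^{j}$ from Lemma~\ref{lem:Vbound}. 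Condition \eqref{eq:cond:aux} is exactly what guarantees that the favorable contribution of the $\xi_n$-update (through $\kappa_n$) dominates the adverse contribution of the $\xi_{n-1}$-update (through $\kappa_{n-1}$), leaving a net strictly negative increment.

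The main obstacle I expect is this last quantitative decrease of $W_n$: one must expand the fractional power $\spowf{\xi_{n,k+1} + T\kappa_n\eta_{n,k+1}}{n-1}{n}$, control it by a mean-value estimate that is uniform over the regime $W_n(\vxi_{k+1}) > \gamma_n\sqrt[n]{N/(2L)}$, absorb the implicit coupling (that $\eta_{n,k+1}$ itself depends on $\xi_{n,k+1}$ and $\xi_{n-1,k+1}$ through $\eta_{n,k+1} = \spowf{\xi_{n,k+1} + \eta_{n+1,k+1}}{n-1}{n} - \xi_{n-1,k+1}$), and finally reassemble all resulting terms so that the algebraic inequality \eqref{eq:cond:aux} produces a constant $\epsilon_n > 0$ independent of $k$. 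Taking $\epsilon_n$ as the minimum of the decrements arising in the two cases then closes the induction.
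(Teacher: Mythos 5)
Your outline follows the paper's own proof essentially step for step: induction on $j$ with the observation that index $j$ is the top-level claim for the order-$j$ subsystem, the identical base case with $\epsilon_1 = T\kappa_1(1-\kappa_0)$, and the case split according to which entry of the maximum in \eqref{eq:V} attains $V_n(\vxi_{k+1})$. So the approach is the right one. However, as a proof it has a genuine gap precisely where you say you expect an obstacle: the strict, uniform decrease of the second max-term is the entire content of the inductive step, it is where condition \eqref{eq:cond:aux} actually enters, and the tools you name would not carry it through as stated. A ``mean-value estimate'' on $y \mapsto \spowf{y}{n-1}{n}$ is not uniformly available in the regime $W_n(\vxi_{k+1}) > \gamma_n\sqrt[n]{N/(2L)}$, because that threshold bounds $\abs{\spowf{\xi_{n,k+1}}{n-1}{n} - \xi_{n-1,k+1}}$ from below but places no lower bound on $\abs{\xi_{n,k+1}}$ itself, and the derivative of the fractional power blows up at zero. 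The paper circumvents this with a one-sided concavity bound (Lemma~\ref{lem:auxlem2}) that applies only after showing that the increment $\xi_{n,k}-\xi_{n,k+1} = T\kappa_n\eta_{n,k+1}$ has the favorable sign, and after stripping the noise $\eta_{n+1,k+1}$ out of the fractional power via Lemma~\ref{lem:auxlem} at the cost of an additive $2^{\frac{1}{n}}\abs{\eta_{n+1,k+1}}^{\frac{n-1}{n}}$; all of this is packaged as Lemma~\ref{lem:auxlem3}. Moreover, extracting a $k$-independent $\epsilon_n$ from the single strict inequality \eqref{eq:cond:aux} requires normalizing $W_{n}$ by the noise level, using monotonicity of the resulting expression in the normalized variable, and a continuity/contradiction argument (the function $h(\epsilon,W)$ in the paper, combined with Lemma~\ref{lem:auxlem4} to convert a decrease of $W_n^{n-1}$ into a decrease of $V_n$).

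There is also a smaller but real gap in your first case. Invoking the induction hypothesis for $V_{n-1}$ requires an explicit noise level for the subsystem input $\eta_{n,k+1}$, and ``weighted homogeneity'' does not supply it: one needs the quantitative bound $\abs{\eta_{n,k+1}} \le \alpha_n V_n(\vxi_{k+1})^{n-1} + 2\left(N/(2L)\right)^{\frac{n-1}{n}} =: \tilde N_n$ (again via Lemma~\ref{lem:auxlem}) together with the specific form $\gamma_n^{n-1} = 2\gamma_{n-1}^{n-1}/(2-\alpha_n\gamma_{n-1}^{n-1})$ of \eqref{eq:recursion} to verify that $V_{n-1}(\vxi_{k+1}) > \gamma_{n-1}\sqrt[n-1]{\tilde N_n/2}$, i.e., that the subsystem is indeed above its own threshold for this state-dependent noise level. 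Without these two computations the induction does not close.
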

By setting $j = n$ in this lemma, the following statement is obtained for the Lyapunov function candidate $V = V_n$ subject to the lemma's conditions:
for all $N > 0$ there exists a positive constant $\epsilon_n$ (dependent on $N$ and on the differentiator parameters) such that $V(\vxi_{k+1}) > \gamma_n \sqrt[n]{N/(2L)}$ implies $V(\vxi_{k+1} \le V(\vxi_k) - \epsilon_n$, provided that $|L \eta_{n+1,k+1}| = |\eta_k| \le N$ and $|L \xi_{0,k+1}/\kappa_0| = |\delta_k| \le L$.
    By using this statement, ultimate error bounds for the error system are shown next.

\subsection{Ultimate Error Bounds}

\begin{prop}
    \label{prop:ultimate}
    Let $n \in \NN$, $L, T \in \RR_{> 0}$, $N \in \RR_{\ge 0}$.
    Suppose that the constants $\beta_1, \ldots, \beta_n$, $\gamma_1, \ldots, \gamma_n \in \RR_{> 0}$ and the differentiator parameters $\lambda_1, \ldots, \lambda_n \in \RR_{> 0}$ satisfy the conditions of Lemma~\ref{lem:main}, and consider the differentiation error system \eqref{eq:diff:errorsystem}.
    Then, for all initial values $x_{1,1}, \ldots, x_{n,1} \in \RR$ there exists a finite integer $K \in \NN$ such that every solution of \eqref{eq:diff:errorsystem} with $|\delta_k| \le L$ and $|\eta_k| \le N$ satisfies
    \begin{equation}
        \label{eq:xbound}
    |x_{i+1,k}| \le \psi_{i} N^{\frac{n-i}{n}} L^{\frac{i}{n}} \qquad \text{with} \qquad \psi_i = \lambda_i \left( \frac{\beta_{n-i} \gamma_{n}}{\sqrt[n]{2}} \right)^{n-i}
    \end{equation}
    for all $i = 0, \ldots, n-1$ and all $k \ge K$.
\end{prop}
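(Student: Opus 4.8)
The plan is to carry out the whole argument in the transformed coordinates $\vxi$ of system \eqref{eq:diff:errorsystem:transformed2}, using $V = V_n$ from \eqref{eq:V} as a Lyapunov function, and only at the very end to translate the resulting bounds on $\abs{\xi_j}$ back into bounds on $\abs{x_{i+1}}$ via the inverse of the state transform $\xi_j = x_{n-j+1}/(\lambda_{n-j}L)$. Since the hypotheses of the proposition are exactly those of Lemma~\ref{lem:main}, its conclusion specialized to $j = n$ is directly available: for the given $N$ there is a constant $\epsilon_n > 0$ such that $V(\vxi_{k+1}) > \gamma_n\sqrt[n]{N/(2L)}$ forces $V(\vxi_{k+1}) \le V(\vxi_k) - \epsilon_n$, provided $\abs{\eta_k}\le N$ and $\abs{\delta_k}\le L$. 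Both provisos hold by assumption, because the abbreviations introduced before \eqref{eq:diff:errorsystem:transformed2} give $L\eta_{n+1,k+1} = -\eta_k$ and $L\xi_{0,k+1}/\kappa_0 = -\delta_k$, so the decrease is valid along the entire solution.

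First I would establish finite-time entry into, and forward invariance of, the sublevel set $\{ \vxi : V(\vxi) \le \theta \}$ with threshold $\theta = \gamma_n\sqrt[n]{N/(2L)}$. Writing $v_k = V(\vxi_k)$: if $v_k \le \theta$, then $v_{k+1}\le\theta$ as well, since otherwise the decrease would yield $v_{k+1}\le v_k-\epsilon_n\le\theta-\epsilon_n<\theta$, a contradiction; hence the set is forward invariant. If instead $v_k>\theta$ held for all $k$, then $v_{k+1}\le v_k-\epsilon_n$ for every $k$, and telescoping would force $v_k\to-\infty$, contradicting $v_k\ge 0$. Thus a finite $K$ (bounded by roughly $v_1/\epsilon_n$, hence depending only on the initial condition) satisfies $v_k\le\theta$ for all $k\ge K$. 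Combining this with Lemma~\ref{lem:Vbound}, which gives $\abs{\xi_j}\le\beta_j^j V_j(\vxi)^j\le\beta_j^j V(\vxi)^j$ because $V_j\le V_n$, I obtain for $k\ge K$ the bound $\abs{\xi_{j,k}}\le(\beta_j\gamma_n/\sqrt[n]{2})^j\,(N/L)^{j/n}$. Setting $j=n-i$ and using $x_{i+1}=\lambda_i L\,\xi_{n-i}$ then produces $\abs{x_{i+1,k}}\le\lambda_i(\beta_{n-i}\gamma_n/\sqrt[n]{2})^{n-i}N^{\frac{n-i}{n}}L^{\frac{i}{n}}=\psi_i N^{\frac{n-i}{n}}L^{\frac{i}{n}}$, which is exactly \eqref{eq:xbound}; the only computation required is the exponent bookkeeping $L\cdot(N/L)^{\frac{n-i}{n}}=N^{\frac{n-i}{n}}L^{\frac{i}{n}}$.

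The routine parts are verifying the two provisos of Lemma~\ref{lem:main} and the index/exponent bookkeeping of the inverse transform. The one genuinely delicate point is the boundary case $N=0$, for which the claimed bound reads $\abs{x_{i+1,k}}\le 0$, i.e.\ exact convergence to the origin in finite time, whereas Lemma~\ref{lem:main} is stated only for $N>0$. Since $\eta_k\equiv 0\le N'$ for every $N'>0$, the bound above holds for each $N'>0$ with its own threshold $\theta(N')\downarrow 0$; the difficulty is upgrading this family of shrinking ultimate bounds to a \emph{single} finite $K$ after which $\vxi_k$ is identically zero. I expect this to be the main obstacle, and I would resolve it either by a limiting argument—exploiting that the decrease drives $V$ below $\theta(N')$ and that the intersection over all $N'>0$ of the sublevel sets $\{V\le\theta(N')\}$ is just the origin—or, more robustly, by combining the Lyapunov phase with the sliding-mode and deadbeat results: once $V$ is small enough the solution lies in the forward-invariant set $\Omega$ of Lemma~\ref{lem:invariant} and operates in discrete-time sliding mode, after which Lemma~\ref{lem:deadbeat} with $N=0$ forces $x_{i,k}=0$ within $n$ further steps.
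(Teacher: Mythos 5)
Your proposal is correct and follows essentially the same route as the paper: apply Lemma~\ref{lem:main} with $j=n$ to get finite-time entry into and invariance of the sublevel set $\{V_n \le \gamma_n\sqrt[n]{N/(2L)}\}$, convert via Lemma~\ref{lem:Vbound} and the inverse state transform, and handle $N=0$ separately. Your ``more robust'' resolution of the $N=0$ case (asymptotic convergence of $V_n$, then entry into the invariant set $\Omega$ of Lemma~\ref{lem:invariant} followed by the deadbeat argument of Lemma~\ref{lem:deadbeat}) is exactly what the paper does, and you are right that the purely limiting argument would not by itself yield a finite $K$.
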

\begin{proof}
    Consider first the case $N > 0$.
Applying Lemma~\ref{lem:main} with $j = n$ guarantees existence of a positive constant $\epsilon_n$ such that $V_n(\vxi_{k+1}) \le V_n(\vxi_k) - \epsilon_n$ holds whenever $V_n(\vxi_{k+1}) > \gamma_n \sqrt[n]{N/(2L)}$.
    Hence, there exists $K \in \NN$ depending on the initial value $V_n(\vxi_{1})$ such that $V_n(\vxi_{k}) \le \gamma_n \sqrt[n]{N/(2L)}$ holds for all $k \ge K$.
    With Lemma~\ref{lem:Vbound}, this implies
    \begin{equation}
        |x_{i+1,k}| = \lambda_i L |\xi_{n-i,k} | \le \lambda_i L \beta_{n-i}^{n-i} V_{n-i}(\vxi_k)^{n-i} \le \lambda_i L \beta_{n-i}^{n-i} V_n(\vxi_k)^{n-i} \le L \lambda_i \beta_{n-i}^{n-i} \gamma_n^{n-i} \left(\frac{N}{2L}\right)^{\frac{n-i}{n}} = \psi_{i} N^{\frac{n-i}{n}} L^{\frac{i}{n}}
    \end{equation}
    for all $k \ge K$.

    For $N = 0$, only $\lim_{k \to \infty} V_n(\vxi_k) = 0$ may be concluded from the previous considerations.
    Due to positive definiteness of $V_n$, this implies $\lim_{k \to \infty} \vxi_k = \lim_{k \to \infty} \x_k = \bm{0}$.
    Since the invariant set $\Omega$ from Lemma~\ref{lem:invariant} contains a neighborhood of the origin, there then exists $\bar K \in \NN$ such that $\x_k \in \Omega$ and $x_{1,k+1} = \eta_k = 0$ for all $k \ge \bar K$.
    Then, Lemma~\ref{lem:deadbeat} implies $\x_{k} = \bm{0}$ for all $k \ge K = \bar K + n$, concluding the proof.
\end{proof}

\subsection{Proofs of the Main Theorems}
\label{sec:proofs:main}

With the obtained results, the main theorems may now be proven.
\begin{proof}[Proof of Theorem~\ref{th:exactness}]
    According to Definition~\ref{def:slidingmode}, the differentiator is in discrete-time sliding mode iff $\spow{z_{1,k+1} - u_k}{0}$ contains more than one element, i.e., if and only if $z_{1,k+1} - u_k = 0$, or equivalently $x_{1,k+1} - \eta_k = 0$ for the error system.
    Using also $\eta_k = 0$, Lemma~\ref{lem:deadbeat} allows to conclude that $\x_k = \bm{0}$ for all $k \ge K + m + 1$.
    The claim then follows from Proposition~\ref{prop:errorbound} with $N = 0$.
\end{proof}

\begin{proof}[Proof of Theorem~\ref{th:main}]
    First note that with $\alpha_1 = 0$ and
    \begin{equation}
        \alpha_{j} = \frac{a_{j-1}}{\gamma_{j-1}^{j-1}} \qquad \text{for $j = 2, \ldots, n$}
    \end{equation}
    the constants $\alpha_j, \beta_j, \gamma_j$ satisfy the conditions of Lemma~\ref{lem:Vbound}.
    Moreover, \eqref{eq:cond} is equivalent to condition \eqref{eq:cond:aux} of Lemma~\ref{lem:main}.
    To see this, rewrite \eqref{eq:cond:aux} as $\kappa_{p} \ge \frac{p}{p-1} \beta_p \tilde \mu_{p-1} \kappa_{p-1}$ with
    \begin{equation}
        \tilde \mu_{p-1} = \frac{\alpha_{p-1} \gamma_p^{p-1} + 2^{\frac{1}{p-1}} \gamma_p\left( \alpha_{p} \gamma_p^{p-1} + 2 \right)^{\frac{p-2}{p-1}}}{\alpha_{p} \gamma_p^{p-1} - 2}
    \end{equation}
    By using the \eqref{eq:recursion} for $j = p-1$,  the numerator may be simplified as
    \begin{equation}
        \tilde \mu_{p-1} = \frac{\alpha_{p-1} \gamma_p^{p-1} + 2^{\frac{1}{p-1}} \gamma_p^{p-1} \left( \alpha_{p}  + 2 \gamma_p^{-(p-1)}\right)^{\frac{p-2}{p-1}}}{\alpha_{p} \gamma_p^{p-1} - 2} = 
        \frac{\alpha_{p-1}  + 2^{\frac{1}{p-1}}  \left( \alpha_{p}  + \frac{2 - \alpha_{p} \gamma_{p-1}^{p-1}}{\gamma_{p-1}^{p-1}} \right)^{\frac{p-2}{p-1}}}{\alpha_{p}  - 2\gamma_p^{-(p-1)}} =  \frac{\alpha_{p-1} + 2 \gamma_{p-1}^{-(p-2)}}{\alpha_{p} - 2 \gamma_{p}^{-(p-1)}}.
    \end{equation}
    In case $p > 2$, substituting the recursion again for $j = p-2$ and $j = p-1$ yields,
    \begin{equation}
        \tilde \mu_{p-1} = \frac{\alpha_{p-1} + \frac{2 - \alpha_{p-1}\gamma_{p-2}^{p-2}}{\gamma_{p-2}^{p-2}}}{\alpha_{p} - \frac{2 - \alpha_{p}\gamma_{p-1}^{p-1}}{\gamma_{p-1}^{p-1}}} = \frac{\gamma_{p-2}^{-(p-2)}}{\alpha_{p} - \gamma_{p-1}^{-(p-1)}} = \frac{\gamma_{p-1}^{p-1}}{\gamma_{p-2}^{p-2}} \frac{1}{\alpha_{p} \gamma_{p-1}^{p-1} - 1} = \frac{\gamma_{p-1}^{p-1}}{\gamma_{p-2}^{p-2}} \frac{1}{a_{p-1} - 1}.
    \end{equation}
    With $\mu_{p-1} = \frac{p \beta_p}{p-1} \tilde \mu_{p-1}$ and setting $p = j+1$, \eqref{eq:cond} is then obtained with $\mu_j$ as in \eqref{eq:muj}.

    Due to Proposition~\ref{prop:initial} and $x_{i,k} = z_{i,k} -g^f_{i,k}$, the initial values $x_{1,1}, \ldots, x_{n,1}$ depend only on the initial conditions $f(0), f^{(1)}(0), \ldots, f^{(n-1)}(0)$ of the signal $f$, and $f \in \FL{m}$ guarantees that $\delta_k = g^f_{n+1,k}$ satisfies $|\delta_k| \le L$ for all $k \in \NN$.
    Hence, Proposition~\ref{prop:ultimate} guarantees existence of $K \in \NN$ such that \eqref{eq:xbound:assumption} holds for all $k \ge K$ with $\psi_i$ in \eqref{eq:xbound}.
    Moreover, it is readily verified that $\psi_{n-p} c_{i,n-p} \le \binom{n-i}{p} c_{i,n}d_i^{p}$ holds for $i = 1, \ldots, n-1$ and $p = 1, \ldots, n-i$.
    Consequently, Proposition~\ref{prop:errorbound} with $M = L$ yields
    \begin{align}
        |y_{i,k} - f^{(i)}(kT)| &\le c_{i,n} L T^{n-i} + \sum_{j=i}^{m} c_{i,j} \psi_{j} L^{\frac{j}{n}} N^{\frac{n-j}{n}} T^{j-i} = c_{i,n} L T^{n-i} + \sum_{p=1}^{n-i} c_{i,n-p} \psi_{n-p} L^{\frac{n-p}{n}} N^{\frac{p}{n}} T^{n-i-p} \nonumber \\
        &\le c_{i,n} L T^{n-i} + \sum_{p=1}^{n-i} \binom{n-i}{p} c_{i,n} d_i^p L^{\frac{n-p}{n}} N^{\frac{p}{n}} T^{n-i-p} 
        = c_{i,n} L \sum_{p=0}^{n-i} \binom{n-i}{p} d_i^p L^{-\frac{p}{n}} N^{\frac{p}{n}} T^{n-i-p} \nonumber \\
        &= c_{i,n} L \left( T + d_i \sqrt[n]{\frac{N}{L}} \right)^{n-i},
    \end{align}
    for all $k \ge K$, proving the first claim.

    To show that the differentiator eventually is in discrete-time sliding mode if $N \le \bar N$, it will first be shown that $\x_{k} \in \Omega$ holds for all $k \ge K$, with $\Omega$ as in \eqref{eq:invariant} and $K$ as above.
    To see this, note that the above application of Proposition~\ref{prop:ultimate} ensures that
    \begin{equation}
        |x_{p+1,k}| \le \psi_{p} N^{\frac{n-p}{n}} L^{\frac{p}{n}} \le \psi_{p} \bar N^{\frac{n-p}{n}} L^{\frac{p}{n}} \le \psi_p \frac{2^{p-n} L T^{n-p}}{2^{-\frac{n-p}{n}} \beta_{n-p}^{n-p} \gamma_n^{n-p}} \left( \frac{\lambda_n-1}{\lambda_p} \right) = 2^{p-n} L T^{n-p} (\lambda_n - 1)
    \end{equation}
    holds for $p = 0, \ldots, n-1$, i.e., $\x_k \in \Omega$, for $k \ge K$.
    Since $N \le \bar N$ and $N < 2^{-n} L T^{n} (\lambda_n-1)$ according to \eqref{eq:Nbar} for $p = 0$ due to $\beta_n > 1$, $\gamma_n > 2$, Lemma~\ref{lem:invariant} may be used to obtain that $x_{1,k+1} = \eta_k$, and thus $z_{1,k+1} = x_{1,k+1} + f(kT) = u_k$ holds for all $k \ge K$, i.e., that the differentiator is in discrete-time sliding mode according to Definition~\ref{def:slidingmode}, because the set $\spow{z_{1,k+1} - u_k}{0} = \spow{0}{0} = [-1,1]$ contains more than one element.
\end{proof}

\section{Simulation Results}
\label{sec:simulation}

In the following, the proposed implicit robust exact differentiator (IRED) is compared to two other state-of-the-art differentiators: the homogeneous implicit discrete-time differentiator (HIDD) proposed by Carvajal-Rubio et al.\cite{carvajal2021implicit} and the implicit homogeneous discrete-time differentiator (I-HDD) introduced by Mojallizadeh et al.\cite{mojallizadeh2021time}
For comparison purposes, the simulation scenario is chosen as in the former paper.\cite[Section~5.2.1 \& Figure~4]{carvajal2021implicit}
Specifically, the signal to be differentiated is chosen as $f(t) = \sin t - \cos \frac{t}{2}$, the sampling time is $T = 0.1$, and differentiator order $m = 3$ and parameters $L = 2$, $\lambda_1 = 3, \lambda_2 = 4.16, \lambda_3 = 3.06, \lambda_4 = 1.1$ are used.
For an accurate numerical implementation, the tolerance parameter $R = 5\cdot 10^{-7}$ is selected for approximately solving polynomial equations by means of Newton's method as discussed in Section~\ref{sec:implementation}.

Figure~\ref{fig:noise-free} shows simulation results in the absence of measurement noise.
One can see that, after a transient convergence phase that is similar for all three approaches, the proposed IRED exhibits the best steady-state accuracy, i.e., performs best in terms of approximating the exactness of the continuous-time RED, in accordance with Theorem~\ref{th:exactness} and Corollary~\ref{cor:quasiexact}.
The HIDD, in contrast, exhibits an additional discretization chattering---as expected from Example~\ref{ex:hidd}---leading to larger and faster oscillations of the differentiation error.
The I-HDD does not exhibit such chattering but, as explained in Example~\ref{ex:ihdd}, features a bias that leads to  significantly larger differentiation error  amplitudes, especially for the first and second derivative.
Figure~\ref{fig:noisy}, finally, shows simulation results with the same setup but with additive noise samples $\eta_k$ chosen as independent and evenly distributed random numbers from the interval $[-N,N]$ with $N = 0.1$.
In this case, one can see that the differentiation errors are predominantly determined by the noise and are of comparable magnitude for all three differentiators.

\begin{figure}[tbp]
    \centering
    \includegraphics{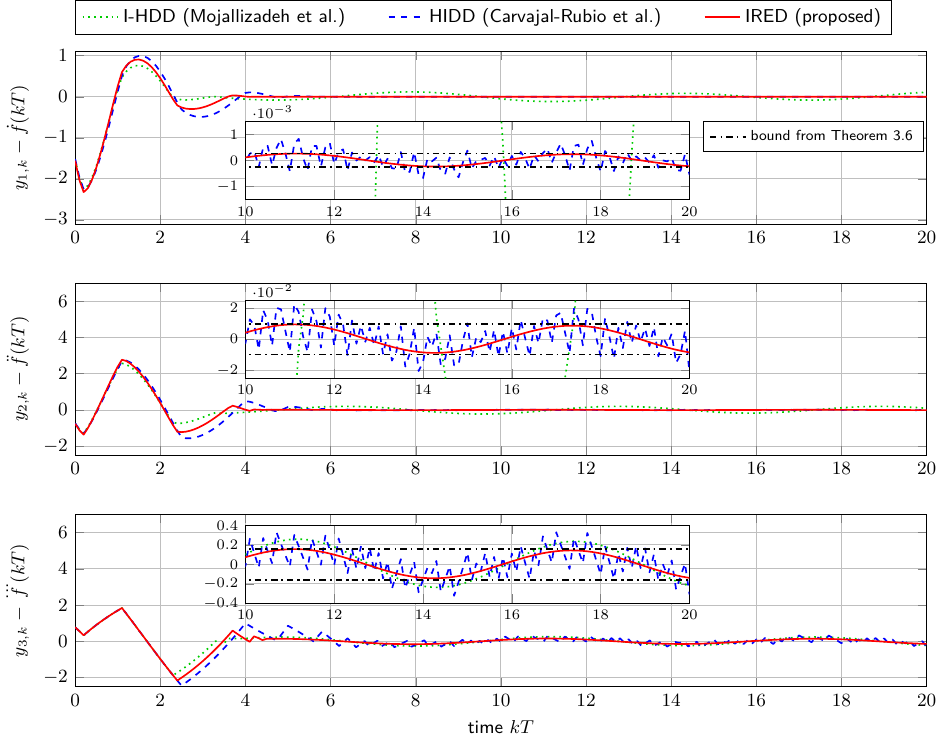}
    \caption{Comparison of I-HDD (green, dotted), HIDD (blue, dashed), and proposed ISHD (red) without measurement noise along with the bound obtained from Theorem~\ref{th:exactness} for $M = \frac{17}{16}$; for comparison with further approaches, see also the simulation comparison by Carvajal-Rubio et al.\cite[Figure 4]{carvajal2021implicit} with the same parameter setting}
    \label{fig:noise-free}
\end{figure}

\begin{figure}[tbp]
    \centering
    \includegraphics{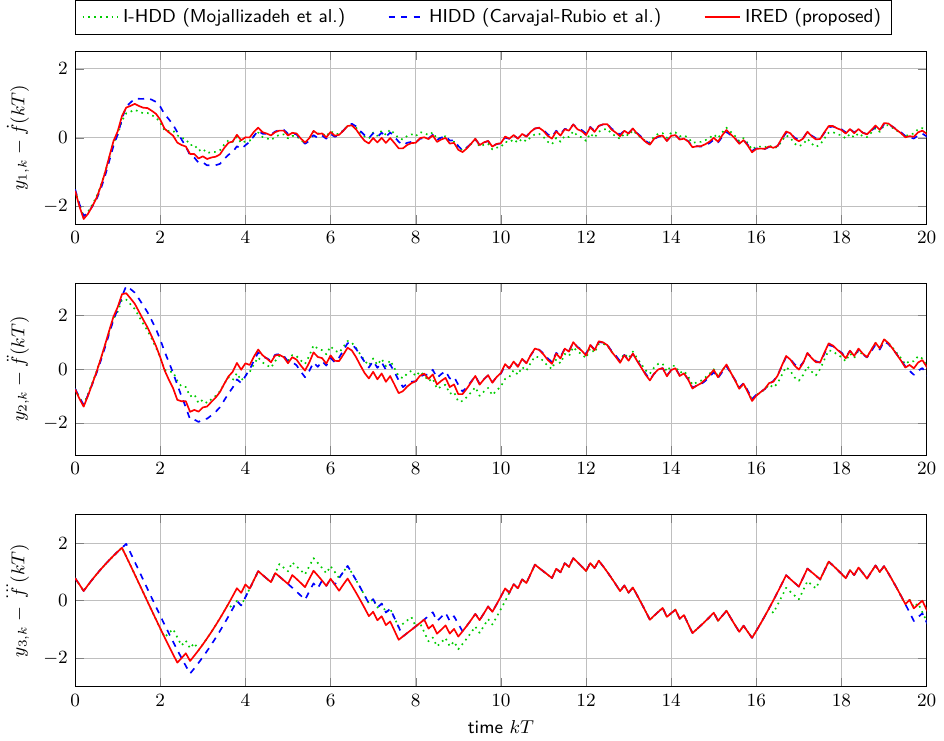}
    \caption{Comparison of I-HDD (green, dotted), HIDD (blue, dashed), and proposed ISHD (red) with simulation setup as in Figure~\ref{fig:noise-free} but with additional, independently uniformly distributed measurement noise $\eta_k \in [-N, N]$ with bound $N = 0.1$}
    \label{fig:noisy}
\end{figure}

\section{Conclusion}
\label{sec:conclusion}

A new implicit discretization of the arbitrary order robust exact differentiator, the implicit robust exact differentiator (IRED), was proposed.
In contrast to existing implicit discretizations, the proposed approach exhibits neither discretization chattering nor bias errors.
For practical use of the differentiator, an approximate numerical implementation of the differentiator was suggested and the influence of approximation errors was formally analyzed.
Compared to existing approaches, the accuracy of the proposed differentiator was shown, both formally and by simulations, to be superior at approximating exactness in the noise-free case.
In simulations, it was furthermore seen to exhibit comparable performance in presence of measurement noise.
Viability and robustness of the approach for arbitrary differentiation orders was furthermore shown by deriving closed-form conditions for finite-time stability of the differentiation error, along with error bounds in presence of measurement noise.

\section*{Acknowledgments}

The author would like to thank his colleague Stefan Koch, Graz University of Technology, for several valuable discussions.
The financial support by the Christian Doppler Research Association, the Austrian Federal Ministry for Digital and Economic Affairs and the National Foundation for Research, Technology and Development is gratefully acknowledged.

\appendix
\section{Proofs}
\label{app:proofs}

\subsection{Auxiliary Lemmata}

A few auxiliary lemmata are first stated that are required in some of the proofs.
\begin{lem}
    \label{lem:auxlem}
    For all $x,\eta \in \RR$, the inequality
    \begin{equation}
        \abs{\spowf{x + \eta}{r-1}{r} - \spowf{x}{r-1}{r}} \le 2^{\frac{1}{r}} |\eta|^{\frac{r-1}{r}}.
    \end{equation}
    holds for all positive integers $r$.
\end{lem}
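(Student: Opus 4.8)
The plan is to treat the signed power $\phi(y) = \spow{y}{p}$ with exponent $p = \frac{r-1}{r} \in [0,1)$ as an odd, monotone function and to bound its modulus of continuity directly, splitting according to whether $x$ and $x+\eta$ lie on the same side of the origin. First I would dispose of the degenerate case $r = 1$, where $p = 0$: here $\spow{\cdot}{0}$ takes values in $[-1,1]$, so the left-hand side is at most $2 = 2^{1/r}\abs{\eta}^0$ and the claim is immediate. For the remaining cases $r \ge 2$ one has $p \in (0,1)$, and I use $1 - p = \tfrac1r$, so that the target constant is $2^{1/r} = 2^{1-p}$.

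For the same-sign case, say $x, x+\eta \ge 0$ (the case $x, x+\eta \le 0$ reduces to it by oddness of $\phi$), the difference becomes $\abs{(x+\eta)^p - x^p}$ with both arguments nonnegative. Here I would invoke the elementary subadditivity inequality $(a+b)^p \le a^p + b^p$ valid for $a,b \ge 0$ and $p \in (0,1)$, which yields $\abs{a^p - b^p} \le \abs{a-b}^p$; applied with $a = x+\eta$ and $b = x$ this gives the bound $\abs{\eta}^p \le 2^{1/r}\abs{\eta}^p$, since $2^{1/r} \ge 1$.

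The opposite-sign case is where the constant $2^{1/r}$ is genuinely needed and is the crux of the argument. Assuming, without loss of generality by oddness, that $x \le 0 \le x+\eta$ (which forces $\eta \ge 0$), the two signed powers have opposite signs and the difference becomes $\spow{x+\eta}{p} - \spow{x}{p} = (x+\eta)^p + (-x)^p$. Writing $a = x+\eta \ge 0$ and $b = -x \ge 0$, I record the constraint $a + b = \eta = \abs{\eta}$ and maximize $a^p + b^p$ over this segment. By concavity of $t \mapsto t^p$, the maximum is attained at $a = b = \abs{\eta}/2$, giving $a^p + b^p \le 2\,(\abs{\eta}/2)^p = 2^{1-p}\abs{\eta}^p = 2^{1/r}\abs{\eta}^p$, as claimed; the choice $x = -\abs{\eta}/2$ shows this is attained, so the constant is sharp.

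Finally I would combine the two cases: since $2^{1/r} \ge 1$, the opposite-sign bound dominates the same-sign one, and the stated inequality holds in all cases. The main technical point to get right is the concavity/maximization step in the opposite-sign case, which both produces the sharp constant and explains why a factor strictly larger than one is unavoidable; the same-sign estimate, by contrast, is entirely standard.
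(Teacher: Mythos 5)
Your proof is correct, but it takes a genuinely different route from the paper. The paper's proof is a two-line citation: it invokes a known inequality of Cruz-Zavala and Moreno (Lemma~7 of the cited reference), namely $|\spow{x_1}{p}+\spow{x_2}{p}|^{1/p} \le 2^{\frac{1}{p}-\frac{1}{q}} |\spow{x_1}{q}+\spow{x_2}{q}|^{1/q}$ for $q \ge p > 0$, and substitutes $p = \tfrac{r-1}{r}$, $q = 1$, $x_1 = x+\eta$, $x_2 = -x$; raising to the power $p$ then gives exactly $2^{1-p}|\eta|^p = 2^{1/r}|\eta|^{(r-1)/r}$. You instead give a self-contained elementary argument: after reducing by oddness to two configurations, you handle the same-sign case with subadditivity of $t \mapsto t^p$ (yielding the bound $|\eta|^p$ without any factor) and the opposite-sign case by maximizing $a^p + b^p$ subject to $a + b = |\eta|$ via concavity, which is where the factor $2^{1-p} = 2^{1/r}$ arises. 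Your version buys three things the paper's does not make explicit: it is independent of an external reference, it explicitly covers $r=1$ (where the cited lemma's hypothesis $p>0$ fails, though the claim is trivial there), and it identifies the extremal configuration $x = -|\eta|/2$ showing the constant is sharp. The paper's version is shorter and reuses a result already standard in the sliding-mode Lyapunov literature. Both are sound; the case split and the Jensen step are exactly right, and your WLOG reductions by oddness are valid.
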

\begin{proof}
    It is well-known\cite[Lemma 7]{cruz2018strict} that $|\spow{x_1}{p}+\spow{x_2}{p}|^{1/p} \le 2^{\frac{1}{p} - \frac{1}{q}} | \spow{x_1}{q} + \spow{x_2}{q} |^{1/q}$ holds for all $x_1, x_2 \in \RR$ and all $q \ge p > 0$.
    Set $p = \frac{r-1}{r}$, $q = 1$, $x_1 = x + \eta$, $x_2 = -x$ to obtain the claimed inequality.
\end{proof}

\begin{lem}
    \label{lem:auxlem2}
    Let $M \in \RR_{> 0}$, $x \in [-M, M]$ and $y \in [0, M-x]$.
    Then,
    \begin{equation}
        \spowf{x+y}{r-1}{r} \ge \spowf{x}{r-1}{r} + \frac{r-1}{r} M^{-\frac{1}{r}} y
    \end{equation}
    holds for all positive integers $r$.
\end{lem}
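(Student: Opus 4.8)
The plan is to read the left-hand side as the increment of the scalar function $\phi(t) = \spowf{t}{r-1}{r}$ over the interval $[x, x+y]$ and to bound that increment from below by the fundamental theorem of calculus. First I would dispose of the degenerate case $r = 1$: there the correction term $\frac{r-1}{r} M^{-\frac{1}{r}} y$ vanishes, and since $y \ge 0$ forces $x+y \ge x$, monotonicity of the sign gives $\spow{x+y}{0} \ge \spow{x}{0}$ immediately. The substantive case is $r \ge 2$, for which $\phi(t) = \apowf{t}{r-1}{r}\sign(t)$ is absolutely continuous on $\RR$ with derivative $\phi'(t) = \frac{r-1}{r}\abs{t}^{-\frac{1}{r}}$ for almost every $t$; the singularity at $t=0$ is integrable precisely because the exponent $-\frac{1}{r}$ exceeds $-1$ when $r \ge 2$.

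With this in hand, for $y > 0$ I would write
\[
    \spowf{x+y}{r-1}{r} - \spowf{x}{r-1}{r} = \int_x^{x+y} \frac{r-1}{r}\abs{s}^{-\frac{1}{r}}\,\diffd s .
\]
The crucial observation is that the whole interval of integration lies in $[-M, M]$: the hypotheses give $x \ge -M$ and, because $y \le M - x$, also $x + y \le M$, so every $s \in [x, x+y]$ satisfies $\abs{s} \le M$. Since $t \mapsto t^{-\frac{1}{r}}$ is decreasing on $\RR_{> 0}$, this yields $\abs{s}^{-\frac{1}{r}} \ge M^{-\frac{1}{r}}$ throughout the interval, whence
\[
    \int_x^{x+y} \frac{r-1}{r}\abs{s}^{-\frac{1}{r}}\,\diffd s \ge \frac{r-1}{r} M^{-\frac{1}{r}} \int_x^{x+y} \diffd s = \frac{r-1}{r} M^{-\frac{1}{r}} y,
\]
which is exactly the claimed inequality; the boundary case $y = 0$ is trivial.

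The only point requiring genuine care is the use of the fundamental theorem of calculus across the point $s = 0$, where $\phi'$ blows up. I expect this to absorb essentially all of the (minor) technical effort, and I would handle it either by invoking the absolute continuity of the Hölder-continuous map $\phi$ directly, or, more elementarily, by splitting the integral at the origin and treating each piece as an improper Riemann integral that converges for $r \ge 2$. Everything else is an elementary monotonicity estimate, and no appeal to the convexity/concavity structure of $\phi$ on the two half-lines is needed once the uniform pointwise bound $\abs{s}^{-\frac{1}{r}} \ge M^{-\frac{1}{r}}$ is available.
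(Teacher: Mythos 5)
Your proof is correct and takes essentially the same route as the paper's: both arguments reduce the claim to the pointwise bound $\frac{r-1}{r}\apow{s}{-\frac{1}{r}} \ge \frac{r-1}{r} M^{-\frac{1}{r}}$ on $[x, x+y] \subseteq [-M,M]$, the paper by checking that $h(y) = \spowf{x+y}{r-1}{r} - \frac{r-1}{r} M^{-\frac{1}{r}} y$ has nonnegative derivative and you by integrating the derivative via the fundamental theorem of calculus. Your extra care at the integrable singularity $s=0$ and the separate (trivial) treatment of $r=1$ are minor refinements of a point the paper glosses over, not a different method.
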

\begin{proof}
    For fixed $x \in [-M,M]$ consider the function $h : [0, M-x] \to \RR$ defined as
    \begin{equation}
        h(y) = \spowf{x+y}{m-1}{m} - \frac{m-1}{m} M^{-\frac{1}{m}} y.
    \end{equation}
    Since $|x+y| \le M$ for all $y \in [0, M-x]$, its derivative satisfies
    \begin{equation}
        \deriv{h}{y} = \frac{r-1}{r} |x+y|^{-\frac{1}{r}} - \frac{r-1}{r} M^{-\frac{1}{r}} \ge 0
    \end{equation}
    and the claim follows from $h(y) \ge h(0)$.
\end{proof}

\begin{lem}
    \label{lem:auxlem4}
    Let $x \in \RR_{> 0}$, $y \in \RR$ and suppose that $x + y \ge 0$.
    Then,
    \begin{equation}
        (x+y)^{\frac{1}{r-1}} \le x^{\frac{1}{r-1}} + \frac{1}{r-1} x^{-\frac{r-2}{r-1}} y
    \end{equation}
    holds for all integers $r \ge 2$.
\end{lem}
\begin{proof}
The statement follows immediately from the fact that $x^{\frac{1}{r-1}}$ is concave on $[0, \infty)$ and $\deriv{}{x}x^{\frac{1}{r-1}} = \frac{1}{r-1} x^{-\frac{r-2}{r-1}}$.
\end{proof}

\begin{lem}
    \label{lem:auxlem3}
    Let $T, \lambda, M \in \RR_{> 0}$, $L, N \in \RR_{\ge 0}$, $r \in \NN$ with $r \ge 2$.
    Consider the system
    \begin{equation}
        \xi_{k+1} = \xi_k - T \lambda ( \spowf{\xi_{k+1} - \eta_k}{r-1}{r} - w_{k+1} )
    \end{equation}
    and define
    \begin{equation}
        \bar W_k =  \abs{\spowf{\xi_k}{r-1}{r} - w_k}.
    \end{equation}
    Then, the relations $\xi_{k}, \xi_{k+1} \in [-M, M]$, $\abs{\eta_k} \le N$ and $\abs{w_{k+1}-w_k} \le L$ imply
    \begin{equation}
        \bar W_{k+1}-  \bar W_k \le - \frac{r-1}{r} \frac{T \lambda( \bar W_{k+1} - 2^{\frac{1}{r}} N^{\frac{r-1}{r}})}{M ^{\frac{1}{r}}} + L
    \end{equation}
    whenever the right-hand side of this inequality is negative.
\end{lem}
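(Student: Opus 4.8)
The plan is to treat $\bar W_k$ as a one-step Lyapunov-type quantity and to exploit that $\spowf{\cdot}{r-1}{r}$ is strictly increasing and concave on $\RR_{\ge 0}$, together with the implicit structure of the update. Abbreviate the unperturbed and perturbed residuals by $e = \spowf{\xi_{k+1}-\eta_k}{r-1}{r} - w_{k+1}$ and $\tilde e = \spowf{\xi_{k+1}}{r-1}{r} - w_{k+1}$, so that the dynamics read $\xi_{k+1}-\xi_k = -T\lambda e$ and $\bar W_{k+1} = \abs{\tilde e}$. The two residuals differ only by $\tilde e - e = \spowf{\xi_{k+1}}{r-1}{r} - \spowf{\xi_{k+1}-\eta_k}{r-1}{r}$, so Lemma~\ref{lem:auxlem} applied with $x = \xi_{k+1}-\eta_k$ and $\eta = \eta_k$ gives $\abs{\tilde e - e} \le 2^{\frac{1}{r}}N^{\frac{r-1}{r}}$, and hence $\abs{e} \ge \bar W_{k+1} - 2^{\frac{1}{r}}N^{\frac{r-1}{r}}$.

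First I would reduce to the regime of interest: whenever the claimed right-hand side is negative, one has $\bar W_{k+1} > 2^{\frac{1}{r}}N^{\frac{r-1}{r}}$, since $\tfrac{r-1}{r}\tfrac{T\lambda}{M^{1/r}}>0$. In this regime I would show that $e$ and $\tilde e$ are nonzero and of equal sign. Indeed, if they had opposite signs then $\abs{\tilde e - e} = \abs{\tilde e}+\abs{e} \ge \bar W_{k+1} > 2^{\frac{1}{r}}N^{\frac{r-1}{r}}$, contradicting the bound above; and $\tilde e = 0$ or $e=0$ would likewise force $\bar W_{k+1} \le 2^{\frac{1}{r}}N^{\frac{r-1}{r}}$. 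Because $\spowf{\cdot}{r-1}{r}$ is increasing, the increment $\spowf{\xi_k}{r-1}{r}-\spowf{\xi_{k+1}}{r-1}{r}$ has the same sign as $\xi_k-\xi_{k+1} = T\lambda e$, hence the same sign as $\tilde e$.

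Next I would lower-bound $\bar W_k$ via the decomposition $\spowf{\xi_k}{r-1}{r} - w_k = \bigl(\spowf{\xi_k}{r-1}{r}-\spowf{\xi_{k+1}}{r-1}{r}\bigr) + \tilde e + (w_{k+1}-w_k)$. Since the first summand and $\tilde e$ have equal sign, the triangle inequality collapses to the tight identity $\abs{(\spowf{\xi_k}{r-1}{r}-\spowf{\xi_{k+1}}{r-1}{r})+\tilde e} = \abs{\spowf{\xi_k}{r-1}{r}-\spowf{\xi_{k+1}}{r-1}{r}}+\bar W_{k+1}$, so that $\bar W_k \ge \abs{\spowf{\xi_k}{r-1}{r}-\spowf{\xi_{k+1}}{r-1}{r}} + \bar W_{k+1} - \abs{w_{k+1}-w_k}$. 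Using $\abs{w_{k+1}-w_k}\le L$ and Lemma~\ref{lem:auxlem2} (with $x$ and $x+y$ taken as $\xi_{k+1},\xi_k$ or $\xi_k,\xi_{k+1}$ according to which is larger, which is admissible because $\xi_k,\xi_{k+1}\in[-M,M]$), the increment is bounded below by $\frac{r-1}{r}M^{-\frac{1}{r}}\abs{\xi_k-\xi_{k+1}} = \frac{r-1}{r}M^{-\frac{1}{r}}T\lambda\abs{e}$.

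Combining the pieces gives $\bar W_{k+1}-\bar W_k = \abs{\tilde e}-\bar W_k \le -\frac{r-1}{r}M^{-\frac{1}{r}}T\lambda\abs{e}+L$; substituting the lower bound $\abs{e}\ge \bar W_{k+1} - 2^{\frac{1}{r}}N^{\frac{r-1}{r}}\ge 0$ into the negative-coefficient term yields precisely the asserted inequality. The main obstacle—and the only place where the hypothesis that the right-hand side is negative is actually used—is the sign-alignment step: the triangle inequality gives a useful lower bound on $\bar W_k$ only once $\tilde e$ and the $\spowf{\cdot}{r-1}{r}$-increment are known to agree in sign, and this agreement is guaranteed exactly when $\bar W_{k+1}$ exceeds the noise floor $2^{\frac{1}{r}}N^{\frac{r-1}{r}}$. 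The remaining estimates are routine invocations of Lemmata~\ref{lem:auxlem} and~\ref{lem:auxlem2}.
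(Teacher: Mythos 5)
Your proof is correct and follows essentially the same route as the paper's: both rest on Lemma~\ref{lem:auxlem} to absorb the noise perturbation into the $2^{1/r}N^{(r-1)/r}$ term and on Lemma~\ref{lem:auxlem2} to lower-bound the increment of $\spowf{\cdot}{r-1}{r}$ between $\xi_{k+1}$ and $\xi_k$. The only difference is presentational — you make the sign-alignment explicit where the paper invokes a without-loss-of-generality symmetry reduction to the case $\spowf{\xi_{k+1}}{r-1}{r} - w_{k+1} \ge 0$.
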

\begin{proof}
    Due to symmetry reasons, consider without restriction of generality the case $\spowf{\xi_{k+1}}{r-1}{r} - w_{k+1} \ge 0$.
    Then, the relation $\spowf{\xi_{k+1} - \eta_k}{r-1}{r} - w_{k+1} = \bar W_{k+1} + \spowf{\xi_{k+1}-\eta_k}{r-1}{r} - \spowf{\xi_{k+1}}{r-1}{r}$ and Lemma~\ref{lem:auxlem} yield
    \begin{equation}
        \xi_{k} = \xi_{k+1} + T \lambda ( \spowf{\xi_{k+1} - \eta_k}{r-1}{r} - w_{k+1} ) 
\ge \xi_{k+1} + T \lambda \left( \bar W_{k+1} - 2^{\frac{1}{r}} N^{\frac{r-1}{r}} \right).
    \end{equation}
    Hence,
    \begin{align}
        \spowf{\xi_k}{r-1}{r} - w_k &\ge \spowf{\xi_{k+1} + T \lambda \left( \bar W_{k+1} - 2^{\frac{1}{r}} N^{\frac{r-1}{r}} \right)}{r-1}{r} -w_{k+1} - L \nonumber \\
        &\ge \spowf{\xi_{k+1}}{r-1}{r}  + \frac{r-1}{r} M^{-\frac{1}{r}} T \lambda \left( \bar W_{k+1} - 2^{\frac{1}{r}} N^{\frac{r-1}{r}} \right) - w_{k+1} - L \nonumber \\
        &= \bar W_{k+1} + \frac{r-1}{r} M^{-\frac{1}{r}} T \lambda \left( \bar W_{k+1} - 2^{\frac{1}{r}} N^{\frac{r-1}{r}} \right)  - L \ge \bar W_{k+1} \ge 0
    \end{align}
    by applying Lemma~\ref{lem:auxlem2}.
\end{proof}

\subsection{Proofs of the Main Lemmata}

\begin{proof}[Proof of Lemma~\ref{lem:invariant}]
    Suppose that $\x_{\bar K} \in \Omega$.
    With the purpose of obtaining a contradiction, assume first that $x_{1,\bar K+1} \ne \eta_{\bar K}$.
    Then,
    \begin{equation}
        (x_{1,\bar K+1} - \eta_{\bar K}) + L T^n \lambda_n \sign(x_{1,\bar K+1}- \eta_{\bar K} ) + \sum_{i=1}^{n-1} L^{\frac{i}{n}} T^{n-i} \lambda_i \spowf{x_{1,\bar K+1}- \eta_{\bar K} }{n-i}{n} - T^{n} \delta_{\bar K} = -\eta_{\bar K} + \sum_{i=1}^{n} T^{i-1} x_{i,\bar K}
    \end{equation}
    and noting that all but the last term on the left-hand side have the same sign yields the contradiction
    \begin{equation}
        L T^n (\lambda_n - 1) \le \abs{- \eta_{\bar K} + \sum_{i=1}^{n} T^{i-1} x_{i,\bar K}} \le N + \sum_{i=1}^{n} T^{i-1} \abs{x_{i,\bar K}} < L T^{n} (\lambda_n - 1) \left( 2^{-n} + \sum_{i=1}^{n} 2^{i-n-1}\right) = L T^{n} (\lambda_n -1).
    \end{equation}
    Hence, $x_{1,\bar K+1} = \eta_{\bar K}$.
    It will now be shown that also the inequalities $\abs{x_{i+1,\bar K+1}} \le 2^{i-n} L T^{n-i} (\lambda_n - 1)$ hold for $i = 1, \ldots, n-1$, i.e., that $\x_{\bar K+1} \in \Omega$.
    To see this by induction over $i$, note that $x_{1,\bar K+1} = \eta_{\bar K}$, $\x_{\bar K} \in \Omega$, and the induction assumption imply
    \begin{equation}
        |x_{i+1,\bar K+1}| = \abs{\frac{x_{i,\bar K+1}-x_{i,\bar K}}{T}} \le \frac{\abs{x_{i,\bar K+1}} + \abs{x_{i,\bar K}}}{T} \le \frac{2}{T} 2^{i-n-1}  L T^{n-i+1} (\lambda_n-1) = 2^{i-n} L T^{n-i} (\lambda_n - 1).
    \end{equation}
    Hence, $\x_{\bar K+1} \in \Omega$.
\end{proof}

\begin{proof}[Proof of Lemma~\ref{lem:deadbeat}]
    The statement is shown by induction over $i$.
    Clearly, it is true for $i = 1$, because $|x_{1,k}| = |\eta_{k-1}| \le N$ for $k \ge K + 1$.
    Suppose now that $|x_{i,k}| \le 2^{i-1} N$ for $k \ge \bar K + i$.
    Then, \eqref{eq:diff:errorsystem:i} with $x_{1,k+1} -\eta_k = 0$ yields $x_{i+1,k+1} = (x_{i,k+1}-x_{i,k})/T$.
    Hence,
    \begin{equation}
        |x_{i+1,\bar K+i+1}| = \frac{|x_{i,\bar K+i+1} - x_{i,\bar K+i}|}{T} \le \frac{|x_{i,\bar K+i+1}| + |x_{i,\bar K+i}|}{T} \le \frac{2}{T} \frac{2^{i-1} N}{T^{i-1}} = \frac{2^i N}{T^i}
    \end{equation}
    is obtained, proving the claim.
\end{proof}

\begin{proof}[Proof of Lemma~\ref{lem:errorbound}]
    Consider the Newton polynomials $w_j : \RR \to \RR$ defined as
    \begin{equation}
        w_j(x) = \frac{1}{j!} \prod_{p=0}^{j-1} (x+p) = \frac{1}{j!} x (x+1) \ldots (x+j-1).
    \end{equation}
    It will first be shown that the constants $c_{i,j}$ satisfy $c_{i,j} = w_j^{(i)}(0)$ for all $i, j \in \NN_0$.
    To see this, note that
    $
        w_{j}(x) = w_{j-1}(x) \frac{x+j-1}{j}
        $
    holds for $j \ge 1$ with $w_0(x) = 1$ and hence, by induction over $i$,
    \begin{equation}
w^{(i)}_{j}(x) = w_{j-1}^{(i)}(x) \frac{x+j-1}{j} + w_{j-1}^{(i-1)}(x) \frac{i}{j}
    \end{equation}
    is obtained for all $i,j \in \NN$.
    Evaluation at $x = 0$ then yields the recursion \eqref{eq:cij} with initial conditions following from $w_0(x) = 1$ and $w_j(0) = 0$ for $j \ge 1$.

    For arbitrary $k \in \NN_0$ apply Newton's interpolation formula with the Newton polynomials $w_j$ and the divided differences $g^{f}_{j+1,k+1}$ to define the polynomial
    \begin{equation}
        p(t) = \sum_{j=0}^{m} T^{j} w_{j}(t/T - k) g^{f}_{j+1,k+1}
    \end{equation}
    interpolating $\bar f(t)$ defined in \eqref{eq:fbar} at $t = kT, (k-1)T, \ldots, (k-m)T$.
    According to Shadrin et al.\cite[Theorem~A]{shadrin1995error}, the approximation error of its derivative
    \begin{equation}
        p^{(i)}(kT) =\sum_{j=i}^{m} T^{j-i} w_j^{(i)}(0) g^{f}_{j+1,k+1} =\sum_{j=i}^{m} T^{j-i}c_{i,j}  g^{f}_{j+1,k+1}
    \end{equation}
    is then bounded from above by
    \begin{equation}
        |p^{(i)}(kT) - f^{(i)}(kT)| \le MT^{n-i} w_n^{(i)}(0) = c_{i,n} M T^{n-i}.
    \end{equation}
    for $i = 1, \ldots, m$.
    This proves relation \eqref{eq:lem:errorbound}.

    To show the claimed equality, let $f(t) = \frac{M t^{m+1}}{(m+1)!}$, fix an integer $k \ge  m$ and define functions $h_1, h_2$ as $h_1(t) = M T^{n} w_n(t/T-k)$ and $h_2 = f - h_1$.
    Then, $h_2$ is a polynomial of degree $m$, i.e., $h_2  \in \FLz{m}$, which allows to conclude from \eqref{eq:lem:errorbound} with $M = 0$ that
    \begin{equation}
        h_2^{(i)}(kT) = \sum_{j = i}^{m} T^{j-i} c_{i,j} g^{h_2}_{j+1,k+1}
    \end{equation}
    Moreover, it is easy to verify that $h_1(kT) = h_1( (k-1) T) = \ldots = h( (k-m) T) = 0$, and hence the corresponding divided differences satisfy $g^{h_1}_{j,k+1} = 0$ for $j = 1, \ldots, n$.
    Since the divided differences $g^f_{i,k}$ are linear in $f$, i.e., $g_{i,k}^{h_1} + g_{i,k}^{h_2} = g_{i,k}^{h_1+h_2}$, the claimed equality
    \begin{equation}
        |f^{(i)}(kT) - \sum_{j=i}^{m} T^{j-i} c_{i,j} g^{f}_{j+1,k+1}| = |f^{(i)}(kT) - \sum_{j=i}^{m} T^{j-i} c_{i,j} g^{h_2}_{j+1,k+1}|  = |h_1^{(i)}(kT)| = M T^{n-i} |w_n^{(i)}(0)| = c_{i,n} M T^{n-i}
    \end{equation}
    follows.
\end{proof}

\begin{proof}[Proof of Lemma~\ref{lem:Vbound}]
    The inequality $\abs{\xi_j} \le \beta_j^j V_j(\vxi)^{j}$ is proven by induction over $j$.
    For $j = 1$, $\abs{\xi_1} \le \beta_1 V_1(\vxi)$ is obvious.
    Let $r \in \NN$, $r \ge 2$ and suppose that the statement is true for $j = r-1$.
    Then,
    \begin{align}
        \abs{\xi_{r}} &\le \left(\abs{\xi_{r-1}} + \abs{\spowf{\xi_r}{r-1}{r} - \xi_{r-1}}\right)^{\frac{r}{r-1}} \nonumber \\
        &\le (\beta_{r-1}^{r-1} V_{r-1}(\vxi)^{r-1} + \alpha_{r} V_r(\vxi)^{r-1})^{\frac{r}{r-1}} \nonumber \\
        &\le (\beta_{r-1}^{r-1} + \alpha_{r} )^{\frac{r}{r-1}} V_r(\vxi)^{r} = \beta_r^{r} V_r(\vxi)^{r}
    \end{align}
    proving the claim.
    The remaining statement of the lemma follows from the fact that $V_n(\vxi) \ge V_j(\vxi)$ holds for all $j$ and all $\vxi \in \RR^n$ by definition \eqref{eq:V}.
\end{proof}

\newcommand{\Vjk}[2]{V_{#1,#2}}
\begin{proof}[Proof of Lemma~\ref{lem:main}]
    The statement is proven by induction over $j$.
    For simplicity, the abbreviations $V_{j,k} = V_j(\vxi_k)$ and $V_{j,k+1} = V_j(\vxi_{k+1})$ are used throughout the proof.
    For $j = 1$, the inequalities $\abs{\xi_{1,k+1}} = \Vjk{1}{k+1} > \gamma_1 N/(2L) = N/L$ and $\abs{\eta_{2,k+1}} \le N/L$ imply that $\xi_{1,k+1}$ has the same sign as $\xi_{1,k+1}+\eta_{2,k+1}$, i.e., $\spow{\xi_{1,k+1}+\eta_{2,k+1}}{0} = \spow{\xi_{1,k+1}}{0} = \{ \sign(\xi_{1,k+1}) \}$, and hence
    \begin{equation}
        \Vjk{1}{k} = \abs{\xi_{1,k}} = \abs{\xi_{1,k+1} + T \kappa_1 \bigl( \sign(\xi_{1,k+1}) - \xi_{0,k+1}\bigr)} \ge \abs{\xi_{1,k+1}} + T \kappa_1 (1 - \kappa_0) = \Vjk{1}{k+1} + \epsilon_1
    \end{equation}
i.e., $\Vjk{1}{k+1} \le \Vjk{1}{k} - \epsilon_1$ holds with $\epsilon_1 = T\kappa_1 (1-\kappa_0) > 0$.
    Let now $r \in \NN$, $r\ge 2$ and suppose that the statment is true for $j = r-1 \ge 1$.
    It will be shown that it is then true also for $j = r$.
    Define $\tilde N_{r+1} := \frac{N}{L}$ and
    \begin{equation}
        \label{eq:tildeNi}
        \tilde N_i = \alpha_{i} \Vjk{i}{k+1}^{i-1} + 2 \left(\tilde N_{i+1}/2\right)^\frac{i-1}{i}
    \end{equation}
    for $i = r$ and $i = r-1$.
    Noting that $|\eta_{r+1,k+1}| \le \tilde N_{r+1}$ by assumption, application of Lemma~\ref{lem:auxlem} then yields
    \begin{align}
        \label{eq:tildeNm}
        \abs{\eta_{r,k+1}} &= \abs{\spowf{\xi_{r,k+1}  + \eta_{r+1,k+1}}{r-1}{r}- \xi_{r-1,k+1}}
        \le \abs{\spowf{\xi_{r,k+1}}{r-1}{r} - \xi_{r-1,k+1}} + 2^{\frac{1}{r}} |\eta_{r+1,k+1}|^{\frac{r-1}{r}}  \nonumber \\
        &\le \alpha_{r} \Vjk{r}{k+1}^{r-1} + 2 \left(\frac{\abs{\eta_{r+1,k+1}}}{2}\right)^\frac{r-1}{r}
        \le \alpha_{r} \Vjk{r}{k+1}^{r-1} + 2 \left(\frac{\tilde N_{r+1}}{2}\right)^\frac{r-1}{r} = \tilde N_r.
    \end{align}
    Analogously,
    \begin{align}
        \label{eq:tildeNmm1}
        \abs{\eta_{r-1,k+1}} \le \alpha_{r-1} \Vjk{r-1}{k+1}^{r-2} + 2^{\frac{1}{r-1}} \abs{\eta_{r,k+1}}^\frac{r-2}{r-1}
        \le \alpha_{r-1} \Vjk{r-1}{k+1}^{r-2} + 2 (\tilde{N}_{r}/2)^\frac{r-2}{r-1} = \tilde N_{r-1},
    \end{align}
    is obtained.
Note that $|\eta_{r-1,k+1}| \le \tilde N_{r-1}$ holds even in the case $r = 2$ by virtue of the convention $\alpha_1 = 0$, because $\tilde N_{1}$ as defined in \eqref{eq:tildeNi} is given by $\tilde N_1 = \alpha_1 V_{1,k+1}^0 + 2 (\tilde{N}_{2}/2)^{0} = 2$ and
    \begin{equation}
        \eta_{1,k+1} \in \spow{\xi_{1,k+1} + \eta_{2,k+1}}{0} -  \xi_{0,k+1} \subseteq [-1-\kappa_0, 1+\kappa_0] \subset [-2,2] = [-\tilde N_1, \tilde N_1].
    \end{equation}
    With $V_r$ being given by
    \begin{equation}
        V_{r}(\vxi) = \max\left\{ V_{r-1}(\vxi), \alpha_{r}^{-\frac{1}{r-1}} \abs{\spowf{\xi_{r}}{r-1}{r} - \xi_{r-1}}^{\frac{1}{r-1}}  \right\}
    \end{equation}
    according to \eqref{eq:V}, distinguish now the two cases $\Vjk{r}{k+1} = \Vjk{r-1}{k+1}$ and $\Vjk{r}{k+1} = \alpha_{r}^{-\frac{1}{r-1}} \abs{\spowf{\xi_{r,k+1}}{r-1}{r} - \xi_{r-1,k+1}}^{\frac{1}{r-1}}$.

    In the first case, $\Vjk{r}{k+1} > \gamma_r \sqrt[r]{\tilde N_{r+1}/2}$ and the recursion \eqref{eq:recursion}, specifically $\gamma_{r}^{r-1} = \frac{2 \gamma_{r-1}^{r-1}}{2 - \alpha_r \gamma_{r-1}^{r-1}}$, imply
    \begin{equation}
        (2 - \gamma_{r-1}^{r-1} \alpha_{r} )\Vjk{r}{k+1}^{r-1} > 2 \gamma_{r-1}^{r-1} (\tilde N_{r+1}/2)^{\frac{r-1}{r}}
    \end{equation}
    which is equivalent to 
    \begin{equation}
        \Vjk{r}{k+1}^{r-1} > \frac{\gamma_{r-1}^{r-1}}{2} \left(\alpha_{r} \Vjk{r}{k+1}^{r-1} + 2 (\tilde N_{r+1}/2)^\frac{r-1}{r}\right) = \gamma_{r-1}^{r-1} \frac{\tilde N_r}{2}
    \end{equation}
    and yields $\Vjk{r-1}{k+1} = \Vjk{r}{k+1}  > \gamma_{r-1} \sqrt[r-1]{\tilde N_r/2}$.
    Hence, by using the induction assumption with $N = L \tilde N_r$,
    \begin{equation}
        \Vjk{r}{k+1} = \Vjk{r-1}{k+1} \le \Vjk{r-1}{k} - \epsilon_{r-1} \le \Vjk{r}{k} - \epsilon_{r-1}
    \end{equation}
    holds for some $\epsilon_{r-1} > 0$.

    In the second case, consider the difference equation
    \begin{equation}
        \xi_{r,k+1} = \xi_{r,k} - T \kappa_r  \eta_{r,k+1} = \xi_{r,k} - T \kappa_r ( \spowf{\xi_{r,k+1} + \eta_{r+1,k+1}}{r-1}{r} - \xi_{r-1,k+1} )
    \end{equation}
    according to \eqref{eq:diff:errorsystem:transformed2} and define $\bar W_k = |\spowf{\xi_{r,k}}{r-1}{r} - \xi_{r-1,k}|$, $\bar W_{k+1} = |\spowf{\xi_{r,k+1}}{r-1}{r} - \xi_{r-1,k+1}|$.
    Furthermore, define $W_k = \alpha_{r} \Vjk{r}{k}^{r-1}$ and $W_{k+1} = \alpha_{r} \Vjk{r}{k+1}^{r-1}$, noting that $W_k \ge \bar W_k$ due to \eqref{eq:V} and $W_{k+1} = \bar W_{k+1}$ by virtue of the considered case.
    Now, note that $\xi_{r,k+1}, \xi_{r,k} \in [-M,M]$ with the abbreviation $M = \beta_r^r \max\{ \Vjk{r}{k+1}^r, \Vjk{r}{k}^r \} = \beta_r^r \alpha_{r}^{-\frac{r}{r-1}} \max\{ W_{k+1}^{\frac{1}{r-1}}, W_k^{\frac{1}{r-1}} \}$ due to Lemma~\ref{lem:Vbound}.
    Since $\abs{\eta_{r+1,k+1}} \le \tilde N_{r+1}$ and $|\xi_{r-1,k+1}-\xi_{r-1,k}| = |T \kappa_{r-1} \eta_{r-1,k+1}| \le T\kappa_{r-1} \tilde N_{r-1}$ hold according to \eqref{eq:tildeNm} and \eqref{eq:tildeNmm1}, now apply Lemma~\ref{lem:auxlem3} to obtain
    \begin{align}
        W_{k+1} - W_k &\le \bar W_{k+1} - \bar W_k \le -\frac{r-1}{r}\cdot \frac{T \kappa_r (\bar W_{k+1} - 2^{\frac{1}{r}} \tilde N_{r+1}^{\frac{r-1}{r}})}{M^{\frac{1}{r}}}  + T \kappa_{r-1} \tilde N_{r-1} \\
        &= -\frac{r-1}{r}\cdot \frac{T \kappa_r \alpha_{r}^{\frac{1}{r-1}} (W_{k+1} - 2^{\frac{1}{r}} \tilde N_{r+1}^{\frac{r-1}{r}})}{\beta_r \max\{ W_{k+1}^{\frac{1}{r-1}}, W_k^{\frac{1}{r-1}} \}}  + T \kappa_{r-1} \tilde N_{r-1},
    \end{align}
    provided that the right-hand side of this inequality is positive, which will be shown to follow from condition \eqref{eq:cond:aux} later on.
    Since $W_{k+1}$ is positive by assumption, Lemma~\ref{lem:auxlem4} may be applied to obtain
\begin{align}
        \label{eq:Wincrement}
        W_{k+1}^{\frac{1}{r-1}} &\le W_k^{\frac{1}{r-1}} -\frac{T \kappa_r }{r}\frac{\alpha_{r}^{\frac{1}{r-1}} (W_{k+1} - 2^{\frac{1}{r}} \tilde N_{r+1}^{\frac{r-1}{r}})}{\beta_r \max\{ W_{k+1}^{\frac{1}{r-1}}, W_k^{\frac{1}{r-1}} \} W_k^{\frac{r-2}{r-1}}}  + \frac{T \kappa_{r-1}}{r-1} \frac{\tilde N_{r-1}}{W_k^{\frac{r-2}{r-1}}} \nonumber \\
        &= W_k^{\frac{1}{r-1}} -\frac{T \kappa_r }{r}\frac{\alpha_{r}^{\frac{1}{r-1}} (W_{k+1} - 2 (\tilde N_{r+1}/2)^{\frac{r-1}{r}})}{\beta_r \max\{ W_{k+1}^{\frac{1}{r-1}}, W_k^{\frac{1}{r-1}} \} W_k^{\frac{r-2}{r-1}}}
        + \frac{T \kappa_{r-1}}{r-1} \frac{\alpha_{r-1} \Vjk{r-1}{k+1}^{r-2} + 2^{\frac{1}{r-1}} \left( W_{k+1} + 2 (\tilde N_{r+1}/2)^\frac{r-1}{r} \right)^{\frac{r-2}{r-1}}}{W_k^{\frac{r-2}{r-1}}} \nonumber \\
        &\le W_k^{\frac{1}{r-1}} -\frac{T \kappa_r }{r}\frac{\alpha_{r}^{\frac{1}{r-1}} (W_{k+1} - 2 (\tilde N_{r+1}/2)^{\frac{r-1}{r}})}{\beta_r \max\{ W_{k+1}^{\frac{1}{r-1}}, W_k^{\frac{1}{r-1}} \} W_k^{\frac{r-2}{r-1}}}
        + \frac{T \kappa_{r-1}}{r-1} \frac{\alpha_{r-1} \alpha_{r}^{-\frac{r-2}{r-1}} W_{k+1}^{\frac{r-2}{r-1}} + 2^{\frac{1}{r-1}} \left( W_{k+1} + 2 (\tilde N_{r+1}/2)^\frac{r-1}{r} \right)^{\frac{r-2}{r-1}}}{W_k^{\frac{r-2}{r-1}}}.
    \end{align}
    It will be shown that for every $N > 0$, i.e., for every $\tilde N_{r+1} > 0$, there exists $\epsilon > 0$ such that $W_{k+1} > \alpha_{r} \gamma_{r}^{r-1} (\tilde N_{r+1}/2)^{\frac{r-1}{r}}$, i.e., $V_{k+1} > \gamma_r \sqrt[r]{\tilde N_{r+1}/2}$, implies $W_{k+1}^{\frac{1}{r-1}} \le W_k^{\frac{1}{r-1}} - \epsilon$, which yields the claimed inequality $\Vjk{r}{k+1} \le \Vjk{r}{k} - \epsilon_r$ with $\epsilon_r = \alpha_{r}^{-\frac{1}{r-1}} \epsilon$.
    To that end, assume to the contrary that $W_k^{\frac{1}{r-1}} < W_{k+1}^{\frac{1}{r-1}} + \epsilon$ for all $\epsilon > 0$.
    Then, also $\max\{ W_{k+1}^{\frac{1}{r-1}}, W_k^{\frac{1}{r-1}} \} \le W_{k+1}^\frac{1}{r-1} + \epsilon$ holds in \eqref{eq:Wincrement}, and hence
    \begin{equation}
        W_{k+1}^{\frac{1}{r-1}} \le W_k^{\frac{1}{r-1}} - T h\left(\epsilon, W_{k+1} (\tilde N_{r+1}/2)^{-\frac{r-1}{r}}\right)
    \end{equation}
    holds with
    \begin{equation}
        h(\epsilon, W) = \frac{\kappa_r}{r}\frac{\alpha_{r}^{\frac{1}{r-1}} (W - 2)}{\beta_r (W^{\frac{1}{r-1}} + \epsilon (\tilde N_{r+1}/2)^{-\frac{1}{r}})^{r-1}} -\frac{\kappa_{r-1}}{r-1} \frac{\alpha_{r-1} \alpha_{r}^{-\frac{r-2}{r-1}} W^\frac{r-2}{r-1} + 2^{\frac{1}{r-1}} \left( W + 2 \right)^{\frac{r-2}{r-1}}}{(W^{\frac{1}{r-1}} + \epsilon (\tilde N_{r+1}/2)^{-\frac{1}{r}})^{r-2}}
    \end{equation}
    whenever $h(\epsilon, W_{k+1} (\tilde N_{r+1}/2)^{-\frac{r-1}{r}}) \ge 0$.
    Note that $h(0,W)$ is strictly increasing and condition 
\eqref{eq:cond:aux} with $p = r$ implies
    \begin{align}
        h(0,\alpha_{r} \gamma_r^{r-1}) &= \frac{\kappa_r}{r}\frac{\alpha_{r}^{\frac{1}{r-1}} (\alpha_{r} \gamma_r^{r-1} - 2)}{\beta_r \alpha_{r} \gamma_r^{r-1}} -\frac{\kappa_{r-1}}{r-1} \frac{\alpha_{r-1} \gamma_r^{r-2} + 2^{\frac{1}{r-1}} \left( \alpha_{r} \gamma_r^{r-1} + 2 \right)^{\frac{r-2}{r-1}}}{\alpha_{r}^{\frac{r-2}{r-1}} \gamma_r^{r-2}} \nonumber \\
        &= \frac{1}{\alpha_r^{\frac{r-2}{r-1}} \gamma_r^{r-1}} \left[ \frac{\kappa_r}{r}\frac{\alpha_{r} \gamma_r^{r-1} - 2}{\beta_r} -\frac{\kappa_{r-1}}{r-1} \left( \alpha_{r-1} \gamma_r^{r-1} + 2^{\frac{1}{r-1}} \gamma_r \left( \alpha_{r} \gamma_r^{r-1} + 2 \right)^{\frac{r-2}{r-1}}\right) \right]  > 0.
    \end{align}
    Since $h$ is also continuous with respect to $\epsilon$, there exists $\epsilon > 0$ such that $h(\epsilon, W) > \epsilon T^{-1}$ for all $W > \alpha_{r}\gamma_r^{r-1}$, yielding the contradiction $W_{k}^{\frac{1}{r-1}} \ge W_{k+1}^{\frac{1}{r-1}} + \epsilon$.
\end{proof}

\bibliography{literature}

\end{document}